\newtheorem{theorem}{Theorem}[section]
\newtheorem{corollary}[theorem]{Corollary}
\newtheorem{lemma}[theorem]{Lemma}
\newtheorem{proposition}[theorem]{Proposition}
\theoremstyle{definition}
\newtheorem{conjecture}[theorem]{Conjecture}
\newtheorem{question}[theorem]{Question}
\newtheorem{problem}[theorem]{Problem}
\newtheorem{remark}[theorem]{Remark}
\numberwithin{equation}{subsection}
\newtheorem*{ack}{Acknowledgement}
\newcommand{\Alex}{\operatorname{Alex}}
\newcommand{\Aut}{\operatorname{Aut}}
\newcommand{\Conj}{\operatorname{Conj}}
\newcommand{\Core}{\operatorname{Core}}
\newcommand{\cl}{\operatorname{cl}}
\newcommand{\Cs}{\operatorname{Cs}}
\newcommand{\cw}{\operatorname{cw}}
\newcommand{\Inn}{\operatorname{Inn}}
\newcommand{\mq}{\operatorname{mq}}
\newcommand{\T}{\operatorname{T}}
\newcommand{\R}{\operatorname{R}}
\newcommand{\id}{\mathrm{id}}
\begin{document}
\title{Zero-divisors and idempotents in quandle rings}
\author{Valeriy G. Bardakov}
\author{Inder Bir S. Passi}
\author{Mahender Singh}

\date{\today}
\address{Sobolev Institute of Mathematics and Novosibirsk State University, Novosibirsk 630090, Russia.}
\address{Novosibirsk State Agrarian University, Dobrolyubova street, 160, Novosibirsk, 630039, Russia.}
\email{bardakov@math.nsc.ru}

\address{Center for Advanced Study in Mathematics, Panjab University, Chandigarh 160014, India.}
\address{Department of Mathematical Sciences, Indian Institute of Science Education and Research (IISER) Mohali, Sector 81,  S. A. S. Nagar, P. O. Manauli, Punjab 140306, India.}
\email{passi@iisermohali.ac.in}

\address{Department of Mathematical Sciences, Indian Institute of Science Education and Research (IISER) Mohali, Sector 81,  S. A. S. Nagar, P. O. Manauli, Punjab 140306, India.}
\email{mahender@iisermohali.ac.in}

\subjclass[2010]{Primary 17D99; Secondary 57M27, 16S34, 20N02}
\keywords{Commutator width, idempotent,  Kaplansky's zero-divisor conjecture, semi-latin quandle, orderable quandle, quandle ring, zero-divisor}

\begin{abstract}
The paper develops further the theory of quandle rings which was introduced by the authors in a recent work. Orderability of quandles is defined and many interesting examples of orderable quandles are given. It is proved that quandle rings of left or right orderable quandles which are semi-latin have no zero-divisors. Idempotents in quandle rings of certain interesting quandles are computed and used to determine sets of maximal quandles in these rings. Understanding of idempotents is further applied to determine automorphism groups of these quandle rings. Also, commutator width of quandle rings is introduced and computed in a few cases.  The paper conclude by commenting on relation of quandle rings with other well-known non-associative algebras.
\end{abstract}
\maketitle

\section{Introduction}
A quandle is an algebraic system with a single binary operation that satisfies axioms encoding the three Reidemeister moves of planar diagrams of links in the 3-space. These objects show appearance in diverse areas of mathematics, namely, knot theory \cite{Joyce, Matveev}, group theory, set-theoretic solutions to the Yang-Baxter equations and Yetter-Drinfeld Modules \cite{Eisermann}, Riemannian symmetric spaces \cite{Loos} and Hopf algebras \cite{Andruskiewitsch}, to name a few. Though already studied under different guises in the literature, study of these objects gained momentum after the fundamental works of Joyce \cite{Joyce} and Matveev \cite{Matveev}, who showed that link quandles are complete invariants of non-split links up to orientation of the ambient 3-space. Although link quandles are strong invariants, it is difficult to check whether two quandles are isomorphic. This motivated search for newer properties and invariants of quandles themselves. We refer the reader to the articles \cite{Carter, Kamada, Nelson} for more on the historical development of the subject.
\par

In recent years, quandles and their weaker analogues called racks have received a great deal of attention. A (co)homology theory for quandles and racks has been developed in \cite{Carter2, Nosaka}, which has led to stronger invariants of links. In fact, a recent work \cite{Szymik} shows that quandle cohomology is a Quillen cohomology which is the cohomology group of a functor from the category of models (or algebras) to that of complexes. Automorphisms of quandles, which reveal a lot about their internal structures, have been investigated in much detail in a series of papers \cite{BDS, BarTimSin, Elhamdadi2012}. Fusing ideas from combinatorial group theory into quandles, recent works \cite{BSS1, BSS2} show that free quandles and link quandles are residually finite. 
\par

In an attempt to linearise the study of quandles, a theory of quandle rings analogous to the classical theory of group rings was proposed by the authors in \cite{BPS}, where several interconnections between quandles and their associated quandle rings were investigated, and an analogue of the group rings isomorphism problem for quandle rings was proposed. The work was carried forward in a recent paper \cite{EFT} of Elhamdadi et al., where examples of non isomorphic finite quandles with isomorphic quandle rings have been given. At the same time, they proved that if two finite quandles admit 2-transitive actions of their inner automorphism groups and have isomorphic quandle rings, then the quandles have the same number of orbits of each cardinality.
\par

The purpose of this paper is to develop the theory of quandle rings further. It may be mentioned that, at this point, our approach and motivation is purely algebraic. However, we do propose a natural problem concerning knots and links (Problem \ref{link-quandles-orderable}). Following \cite{BPS}, given a quandle (resp. rack) $Q$ and an integral domain $R$, the quandle (resp. rack) ring $R[Q]$ of $Q$ with coefficients in $R$ is defined as the set of all formal finite $R$-linear combinations of elements of $Q$ with usual operations (see Section \ref{sec-prelim}). We investigate zero-divisors in quandle rings by introducing orderability in quandles and show that many interesting quandles arising from orderable groups are left or right orderable. Investigation of unit groups of group rings is a major research problem in the subject. An analogue of this problem for quandles is the investigation of maximal quandles in quandle rings. We show that  the set $\mq(R[Q])$ of all non-zero maximal quandles in $R[Q]$ contains, in general, more than one element. Since each element of a quandle is an idempotent in its quandle ring, the first step towards a solution of the problem is to describe the set $I(R[Q])$ of all non-zero idempotents in $R[Q]$. The problem of determining $\mq(R[Q])$ also connects with the description of the group $\Aut(R[Q])$ of ring automorphisms of $R[Q]$ that are $R$-linear. Clearly every automorphism of $Q$ induces an automorphism of  $R[Q]$. In fact, any automorphism $\phi \in \Aut(R[Q])$ is defined by its action on $Q$ and its image $\phi(Q)$ lies in some maximal quandle from $\mq(R[Q])$. We compute idempotents, maximal quandles and $R$-algebra automorphisms of quandle rings of small order quandles including all quandles of order 3.
\par

The paper is organised as follows. In Section \ref{sec-prelim}, we recall some basic definitions and examples from the theory of  quandles and quandle rings. In Section \ref{orderability-zero-divisors}, we introduce unique product quandles and show that their quandle rings have  no zero-divisors over integral domains. We define orderability of quandles to give explicit examples of such quandles, and show that a semi-latin quandle which is right or left orderable is necessarily a unique product quandle. Our results also answer a question from \cite[Question 4.3]{EFT} about existence of quandles whose quandle rings do not have zero-divisors, and suggest an analogue of  Kaplansky's zero-divisor conjecture for quandle rings. In Section \ref{sec-idempotents}, we compute idempotents in quandle rings $R[\T]$, $\mathbb{Z}[\R_3]$, $\mathbb{Z}[\R_4]$ and $\mathbb{Z}[\Cs(4)]$, where $\T$ is any trivial quandle, $\R_n$ is a dihedral quandle and $\Cs(4)$ is the 3-element singular cyclic quandle of Joyce \cite{Joyce-Thesis}. In Section \ref{sec-max-quandles}, the computation of idempotents is then used to determine the set of maximal quandles in these quandle rings. In Section \ref{sec-auto}, we determine automorphism groups of these quandle rings. More precisely, we prove that $\Aut(\mathbb{Z}[\T_2]) \cong \mathbb{Z} \rtimes \mathbb{Z}_2$, $\Aut(\mathbb{Z}[\R_3]) \cong \Aut(\R_3)$, $\Aut (\mathbb{Z}[\R_4]) \cong ( \mathbb{Z}_2 \times \mathbb{Z}_2) \rtimes \mathbb{Z}_2$ and $\Aut(\mathbb{Z}[\Cs(4)])\cong \mathbb{Z}_2$, where $\T_2$ is the 2-element trivial quandle. In Section, \ref{commutator-width}, we introduce commutator width of quandle rings and give a bound for the commutator width of finite non-commutative quandles admitting a 2-transitive action by their automorphism groups. We also compute the precise commutator width of quandle rings $R[\T]$, $R[\R_3]$, $R[\R_4]$ and $R[\Cs(4)]$. We conclude with Section \ref{relations-other-algebras} where we comment on relation of quandle algebras with other well-known non-associative algebras like alternative  algebras, Jordan algebras and Lie algebras.
\medskip

\section{Preliminaries on quandle rings}\label{sec-prelim}
A {\it quandle} is a non-empty set $Q$ with a binary operation $(x,y) \mapsto x * y$ satisfying the following axioms:
\begin{enumerate}
\item[(Q1)] $x*x=x$ for all $x \in Q$,
\item[(Q2)] For any $x,y \in Q$ there exists a unique $z \in Q$ such that $x=z*y$,
\item[(Q3)] $(x*y)*z=(x*z) * (y*z)$ for all $x,y,z \in Q$.
\end{enumerate}

An algebraic system satisfying only (Q2) and (Q3)  is called a {\it rack}. Many interesting examples of quandles come from groups showing deep connection with group theory.
\par

\begin{itemize}
\item If $G$ is a group, then the binary operation $a*b= b^{-1} a b$ turns $G$ into the quandle $\Conj(G)$ called the {\it conjugation quandle} of $G$.
\item A group $G$ with the binary operation $a*b= b a^{-1} b$ turns the set $G$ into the quandle $\Core(G)$ called the {\it core quandle} of $G$. In particular, if $G= \mathbb{Z}_n$, the cyclic group of order $n$, then it is called the {\it dihedral quandle} and denoted by $\R_n$.
\item Let $G$ be a group and $\phi \in \Aut(G)$. Then the set $G$ with binary operation $a * b = \phi(ab^{-1})b$ forms a quandle $\Alex(G,\phi)$ referred as the  {\it generalized Alexander quandle} of $G$ with respect to $\phi$.
\end{itemize}
\medskip

A quandle  $Q$ is called {\it trivial} if $x*y=x$ for all $x, y \in Q$.  Unlike groups, a trivial quandle can have arbitrary number of elements. We denote the $n$-element trivial quandle by $\T_n$ and an arbitrary trivial quandle by $\T$.
\medskip

Notice that the axioms (Q2) and (Q3) are equivalent to the map $S_x: Q \to Q$ given by $$S_x(y)=y*x$$ being an automorphism of $Q$ for each $x \in Q$. These automorphisms are called {\it inner automorphisms}, and the group generated by all such automorphisms is denoted by $\Inn(X)$. A quandle is said to be {\it connected}  if it admits a transitive action by its group of inner automorphisms. For example, dihedral quandles of odd order are connected, whereas that of even order are disconnected. A quandle $X$ is called {\it involutary} if $S_x^2 = \id_Q$ for each $x \in Q$. For example, all core quandles are involutary.  A quandle (resp. rack) $Q$ is called {\it commutative} if $x * y = y * x$ for all $x, y \in Q$. The dihedral quandle $\R_3$ is commutative and no trivial quandle with more than one element is commutative. 
\par

A quandle $Q$ is called {\it latin} if left multiplication by each element of $Q$ is a bijection of $Q$, that is, the map $L_x : Q \to Q$ defined by $$L_x(y) = x *y$$ is a bijection for each $x \in Q$. For example, $\R_3$ is latin but no trivial quandle with more than one element is latin. We say that $Q$ is  {\it semi-latin} if left multiplication by each element of $Q$ is an injection of $Q$. Obviously, every latin quandle is semi-latin. The converse is not true in general; for example, the quandle $\Core(\mathbb{Z})$ is semi-latin but not latin. In fact, a direct check shows that if $G$ is an abelian group, then $\Core(G)$ is semi-latin if and only $G$ has no 2-torsion. Similarly, one can see that for an arbitrary group $G$ and an automorphism $\phi \in \Aut(G)$, the quandle $\Alex(G, \phi)$ is semi-latin if and only if $\phi$ is fixed-point free.
\medskip

Next we recall some definitions and results from \cite{BPS}. Throughout this paper, unless specified otherwise, $R$ will be an integral domain, that is, an associative and commutative ring with unity and without zero-divisors. From now onwards, except the situation where there are more than one binary operations on a set, we denote the multiplication in a quandle (resp. rack) by $(x,y) \mapsto xy$.
\medskip

Let $Q$ be a quandle and  $R[Q]$ the set of all formal finite $R$-linear combinations of elements of $Q$, that is,
$$R[Q]:=\Big\{ \sum_i\alpha_i x_i~|~\alpha_i \in R,~ x_i \in Q \Big\}.$$
Then $R[Q]$ is an additive abelian group with coefficient-wise addition. Define multiplication in $R[Q]$ by setting $$\big(\sum_i\alpha_i x_i\big) \big(\sum_j\beta_j x_j\big):=\sum_{i,j}\alpha_i\beta_j (x_i x_j).$$
Clearly, the multiplication is distributive with respect to addition from both left and right, and $R[Q]$ forms a ring (in fact, an $R$-algebra), which we call the {\it quandle ring} (or {\it quandle algebra}) of $Q$ with coefficients in the ring $R$. Since $Q$ is non-associative, unless it is a trivial quandle, it follows that $R[Q]$ is a non-associative ring in general. If $Q$ is a rack, then its {\it rack ring} (or {\it rack algebra}) $R[Q]$ is defined analogously.  
\par

Define the augmentation map
$$\varepsilon: R[Q] \to R$$
 by setting $$\varepsilon \big(\sum_i\alpha_i x_i\big)= \sum_i\alpha_i .$$
Clearly, $\varepsilon$ is a surjective ring homomorphism, and $\Delta_R(Q):= \ker(\varepsilon)$ is a two-sided ideal of $R[Q]$, called the {\it augmentation ideal} of $R[Q]$. It is easy to see that $\{x-y~|~x, y \in Q \}$ is  a generating set for $\Delta_R(Q)$ as an $R$-module. Further, if $x_0 \in Q$ is a fixed element, then the set $\big\{x-x_0~|~x \in Q \setminus \{ x_0\} \big\}$ is a basis for $\Delta_R(Q)$ as an $R$-module. For convenience, we denote $\Delta_\mathbb{Z}(Q)$ by $\Delta(Q)$. 
\par

Since $R[Q]$ is a ring without unity, it is desirable to embed it into a ring with unity. The ring
$$
R^\circ[Q]:=R[Q] \oplus Re,
$$
where $e$ is a symbol (not in $Q$) satisfying $e\big(\sum_i\alpha_i x_i\big)= \sum_i\alpha_i x_i= \big(\sum_i\alpha_i x_i\big)e$, is called the {\it extended quandle ring} of $Q$. For convenience, we denote the unity $1e$ of $R^\circ[Q]$ by $e$.  We can extend the augmentation map to $\varepsilon: R^\circ[Q] \to R$ and define the {\it extended augmentation ideal} as
$$\Delta_{R^\circ}(Q):= \ker(\varepsilon: R^\circ[Q] \to R).$$
As before, it is easy to see that the set $\{x-e~|~ x \in Q  \}$ is a basis for $\Delta_{R^\circ}(Q)$ as an $R$-module.
\par

We conclude the section by recalling a result that characterises trivial quandles in terms of their augmentation ideals \cite[Theorem 3.5]{BPS}.
\par

\begin{theorem} \label{deltasqzero}
A quandle $Q$ is trivial if and only if $\Delta_R^2(Q)=\{0\}$.
\end{theorem}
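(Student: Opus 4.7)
Both directions reduce to the single computation
\[
(x - y)(x - y) = x^2 - xy - yx + y^2 = x + y - xy - yx
\]
in $R[Q]$, where the last equality uses (Q1). For the forward direction, if $Q$ is trivial then for any $x, y, z, w \in Q$ we have $(x - y)(z - w) = xz - xw - yz + yw = x - x - y + y = 0$; since such products generate $\Delta_R^2(Q)$ as an $R$-module, $\Delta_R^2(Q) = \{0\}$.

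For the converse, suppose $\Delta_R^2(Q) = \{0\}$ and fix arbitrary $x, y \in Q$; the goal is to prove $xy = x$. If $x = y$ this is just (Q1), so assume $x \ne y$. Then the element $(x-y)^2 \in \Delta_R^2(Q)$ yields the relation $x + y - xy - yx = 0$ in $R[Q]$. Since $Q$ is an $R$-basis of $R[Q]$, the coefficient of each basis element appearing in this sum must vanish. I would then analyse the possible coincidences among the four (not necessarily distinct) elements $x, y, xy, yx$: whenever $\{xy, yx\}\not\subseteq \{x,y\}$, or whenever $xy = yx$, at least one coefficient in the relation above remains nonzero, using only that $1 \ne 0$ in the integral domain $R$. (Even in characteristic $2$, the coincidence $xy = yx \notin \{x,y\}$ is killed by the coefficient $1$ of $x$, not by the coefficient $-2$ of $xy$.) This narrows the configuration to exactly two surviving cases: either (a) $xy = x$ and $yx = y$, which is the desired conclusion, or (b) the ``swap'' $xy = y$ and $yx = x$.

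The final step eliminates case (b), which is the unique point at which an axiom beyond (Q1) is used: from (b) one has $yx = x = xx$ by (Q1), and then (Q2) (i.e., right-multiplication by $x$ is injective) forces $y = x$, contradicting $x \ne y$. I expect the main obstacle to be purely the bookkeeping in the coefficient comparison among coincidences of $x, y, xy, yx$; the argument itself hinges on one identity and one application of (Q2), but one must be careful to check that no characteristic of $R$ allows a stray cancellation to revive the discarded cases.
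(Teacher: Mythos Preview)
Your proof is correct. The forward direction is immediate, and for the converse your coefficient comparison on $x + y - xy - yx = 0$ is exhaustive: the characteristic-$2$ edge case $xy = yx \notin \{x,y\}$ is correctly disposed of via the coefficient $1$ of $x$, the four subcases with $xy, yx \in \{x,y\}$ collapse to (a) and (b) exactly as you say, and the elimination of the swap case (b) by $yx = x = xx$ together with injectivity of $S_x$ is clean.

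As for comparison with the paper: there is nothing to compare. The present paper does not prove this theorem at all; it merely recalls it from the authors' earlier work \cite[Theorem~3.5]{BPS} at the end of Section~\ref{sec-prelim}. Your argument is therefore a self-contained proof supplied where the paper gives only a citation.
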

\medskip

\section{zero-divisors in quandle rings}\label{orderability-zero-divisors}
Recall that a non-zero element $u$ of a ring is called a {\it zero-divisor} if there exists a non-zero element $v$ such that either $uv=0$ or $vu=0$. Every non-zero nilpotent element of an associative ring is a zero-divisor. Determining whether group rings of torsion-free groups over fields have zero-divisors is a classical and still open problem in the theory of group rings. In this section, we investigate the analogous problem for quandle rings.
\par
Let $R$ be an integral domain. It is easy to see that if $\T$ is a trivial quandle with more than one element, then $R[\T]$ contains zero-divisors. If $G$ is a group with an element $g$  of finite order, say $n>1$, then the element $$\hat{g}:= 1+ g+ \cdots+ g^{n-1}$$ of the integral group ring $\mathbb{Z}[G]$ satisfies
$\hat{g}(1-g)=0$, and hence $\mathbb{Z}[G]$ has a  zero-divisor. By analogy, it was proved in \cite[Proposition 4.1]{EFT} that, if a quandle $Q$ has a finite orbit (under the action of $\Inn(Q)$) with more than one element, then $R[Q]$ has zero-divisors.
\par

We first formulate some sufficient conditions under which a quandle ring contains zero-divisors. We say that a quandle $Q$ containing more than one element is {\it inert} if there is a finite subset $A = \{ a_1, a_2, \ldots, a_n \}$ of $Q$ and two distinct elements $x, y \in Q$ such that $A x = A y$, where $A z = \{ a_1 z, a_2 z, \ldots, a_n z \}$.

\begin{proposition}\label{two different elements}
The following hold:
\begin{enumerate}
\item Any extended quandle ring $R^{\circ}[Q]$ contains zero-divisors.
\item  If $Q$ is a quandle containing a trivial subquandle with more than one element, then $R[Q]$ contains zero-divisors.
\item If $Q$ is an inert quandle, then the quandle ring $R[Q]$ contains zero-divisors. In particular, if $Q$ contains a finite subquandle with more than one element, then the quandle ring $R[Q]$ contains zero-divisors.
\item If $Q$ is not semi-latin, then the quandle ring $R[Q]$ contains zero-divisors.
\end{enumerate}
\end{proposition}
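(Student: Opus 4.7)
The plan is to produce an explicit zero-divisor in each of the four cases by exploiting the quandle idempotent law $x \cdot x = x$ together with the fact that $R[Q]$ (respectively $R^\circ[Q]$) is, by construction, the free $R$-module on $Q$ (respectively on $Q \sqcup \{e\}$), so that distinct formal $R$-linear combinations are automatically nonzero. The constructions themselves will be one-line identities; the only mild subtlety will arise in (3), where I will need the set equality $Ax = Ay$ to upgrade to an equality of formal sums in $R[Q]$.

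For (1), I would take any $x \in Q$ and compute
\[
x(x - e) = x \cdot x - x = x - x = 0,
\]
noting that $x \ne 0$ and $x - e \ne 0$ because $e$ is freely adjoined outside $Q$. For (2), I would pick distinct $a, b$ in a trivial subquandle, so that $ab = a$ and $ba = b$, and expand
\[
(a - b)(a - b) = a \cdot a - ab - ba + b \cdot b = a - a - b + b = 0,
\]
so $a - b \ne 0$ squares to $0$ and is therefore a zero-divisor of itself.

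For (3), given a finite $A = \{a_1, \ldots, a_n\}$ and distinct $x, y \in Q$ with $Ax = Ay$, I would first invoke the bijectivity of the right translations $S_x$ and $S_y$ to conclude that the $n$ elements $a_1 x, \ldots, a_n x$ are pairwise distinct (and similarly for $y$); the set equality $Ax = Ay$ then promotes to the identity $\sum_i a_i x = \sum_i a_i y$ in the free module $R[Q]$, and hence
\[
\bigl(a_1 + \cdots + a_n\bigr)(x - y) = 0,
\]
with neither factor zero. The \emph{in particular} clause reduces to showing that a finite subquandle $Q'$ with $|Q'| \ge 2$ is inert: for any $z \in Q'$, the restriction $S_z|_{Q'}$ is an injection of a finite set to itself and hence a bijection, so $Q' z = Q'$; taking $A = Q'$ and any two distinct $x, y \in Q'$ yields $Ax = Q' = Ay$. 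Finally, (4) is immediate from the definition: failure of semi-latinity provides $a \in Q$ and $x \ne y$ with $ax = ay$, so $a(x - y) = 0$ with both factors nonzero. No step presents a real obstacle; the whole proposition is a matter of writing down the right short combinations.
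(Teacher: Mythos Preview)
Your proof is correct and follows essentially the same approach as the paper: identical constructions in all four parts, with the same one-line verifications. You are in fact slightly more careful in (3), explicitly invoking bijectivity of $S_x$ and $S_y$ to justify why the set equality $Ax = Ay$ promotes to an equality of sums in $R[Q]$, and spelling out why a finite subquandle witnesses inertness; the paper leaves both of these implicit.
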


\begin{proof}
If $e$ is the unit in $R^{\circ}[Q]$ and $x \in Q$, then
$$
x (e - x) = x - x^2 = x - x = 0,
$$
Thus, $x$ and $e-x$ are zero-divisors, which proves (1).
\par
For (2), let $T = \{ x_1, x_2 \}$ be a trivial subquandle in $Q$. Taking $u = x_1 - x_2 \in R[Q]$ gives
$$
u^2 = (x_1 - x_2) (x_1 - x_2) = 0.
$$
\par

For (3), let $x$ and $y$ be two distinct elements in $Q$ and  $A = \{ a_1, a_2, \ldots, a_m \}$ such that $A x = A y$. Then
$$
(a_1 + a_2 + \cdots + a_k) (x - y) = 0.
$$
If $Q$ contains a finite subquandle $A$, then we can take $x$ and $y$ to be two distinct elements of $A$.
\par
For (4), suppose that for some $x\in Q$ there exist distinct $y, z \in Q$ such that $L_x(y) = L_x(z)$. Then we have
$$
x (y-z) = L_x(y) - L_x(z) = 0.
$$
\end{proof}

If  $R$ is an integral domain, then it is obvious that the quandle ring $R[T_1]$ of the one element quandle $T_1$ does not have zero-divisors.  The following question was raised in \cite[Question 4.3]{EFT}. 

\begin{question}\label{Elhamdadi-question}
Are there other quandles $Q$ for which $R[Q]$ does not have zero-divisors?
\end{question}

We introduce a class of quandles whose quandle rings do not have zero-divisors.  As in case of groups (see, for example, \cite[Chapter 13]{Passman}), a quandle $Q$ is said to be a {\it up-quandle} (unique product quandle) if given any two non-empty finite subsets $A$ and $B$ of $Q$, there is at least one element $x \in Q$ that has a unique representation of the form $x = a b$ for some $a \in A$ and $b\in B$. A quandle $Q$ is said to be a {\it tup-quandle} (two unique product quandle) if given any two non-empty finite subsets $A$ and $B$ of $Q$ with $|A| + |B| > 2$, there exists  at least two distinct elements $x, y \in Q$ that have unique representations of the form $x = a b$ and $y = c d$, where $a, c \in A$ and $b, d \in B$. It is clear that every t.u.p-quandle is a up-quandle.
\par

The following observation is an analogue of the corresponding result for groups \cite[Chapter 13, Lemma 1.9]{Passman}.

\begin{proposition} \label{up-zero}
If $Q$ is a up-quandle, then $R[Q]$ has no zero-divisors.
\end{proposition}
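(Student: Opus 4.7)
The plan is to mimic the standard argument for group rings verbatim, since the unique-product property is designed to make exactly that proof go through. Given nonzero $u, v \in R[Q]$, I would write them in the form $u = \sum_{a \in A} \alpha_a a$ and $v = \sum_{b \in B} \beta_b b$, where $A, B \subseteq Q$ are the (finite, nonempty) supports and each coefficient $\alpha_a, \beta_b$ is a nonzero element of $R$. The goal is to show that $uv \neq 0$ (and symmetrically $vu \neq 0$); this rules out zero-divisors.

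Next, I would expand the product as
\[
uv \;=\; \sum_{a \in A,\, b \in B} \alpha_a \beta_b \, (ab),
\]
and for each $x \in Q$ collect the contributions to the coefficient of $x$:
\[
[uv]_x \;=\; \sum_{\substack{a \in A,\, b \in B \\ ab = x}} \alpha_a \beta_b.
\]
Apply the up-quandle hypothesis to the pair $(A, B)$: there exists $x_0 \in Q$ and a \emph{unique} pair $(a_0, b_0) \in A \times B$ with $a_0 b_0 = x_0$. Then $[uv]_{x_0} = \alpha_{a_0} \beta_{b_0}$ is a product of two nonzero elements of the integral domain $R$, hence nonzero. Therefore $uv \neq 0$.

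The argument for $vu$ is identical after swapping the roles of $A$ and $B$; alternatively, one may note that the up-property is symmetric in its two arguments. I do not foresee any real obstacle here: once the definition is in place, the proof is a direct coefficient-tracking argument, and the only substantive input is that $R$ has no zero-divisors, which is part of our standing assumption on $R$. The only mild care needed is to be explicit that the supports $A$ and $B$ are nonempty (which holds precisely because $u$ and $v$ are assumed nonzero), so that the up-hypothesis applies.
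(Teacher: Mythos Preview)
Your proof is correct and follows essentially the same argument as the paper's own proof: expand the product, use the unique-product hypothesis to locate a coefficient that is a single nonzero product $\alpha_{a_0}\beta_{b_0}$ in the integral domain $R$, and conclude $uv \neq 0$. The paper does not separately treat $vu$, but your observation that the up-property is symmetric in $A$ and $B$ handles this without extra work.
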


\begin{proof}
Let $u$ and $v$ be non-zero elements of $R[Q]$ and write
$$
u = \sum_{i=1}^n \alpha_i x_i,~~~v = \sum_{j=1}^m \beta_j y_j,
$$
where $\alpha_i, \beta_j$ are non-zero elements of $R$ and $A = \{ x_i \}$ and $B = \{ y_j \}$ are non-empty subsets of $Q$. Then
$$
u v = \sum_{i, j} \alpha_i \beta_j x_i  y_j,
$$
where each $\alpha_i \beta_j  \not= 0$ since $R$ has no zero-divisors. Since $Q$ is a up-quandle, there exists a uniquely represented element in the product $A B$, say $z = x_1 y_1$. It then follows that non-zero summand $\alpha_1 \beta_1 x_1  y_1$ cannot be cancelled by any other term in the product $uv$. Thus, $u v \not= 0$ and $R[Q]$ has no zero-divisors.
\end{proof}

We now introduce orderable quandles to give explicit examples of up-quandles. Following the notion of orderability of groups \cite[Chapter 13]{Passman}, we say that a quandle $Q$ is \textit{right orderable} if the elements of $Q$ are linearly ordered with respect to a  relation $<$ such that  $x < y$ implies $xz < yz$ for all $x, y, z \in Q$.  Similarly, we say that $Q$ is \textit{left orderable} if the elements of $Q$ are linearly ordered with respect to a  relation $<$ such that $x < y$ implies $zx < zy$ for all $x, y, z \in Q$. A  quandle is said to be \textit{bi-orderable} (or simply \textit{orderable}) if it is both left and right orderable. Note that the definitions make sense for racks as well.
\par

A right orderable group $G$ must also be left orderable and vice-versa, but not under the same ordering. Indeed, if $<$ is a right ordering for  $G$, then it is easy to see that $<'$ defined by $x <' y$ if and only if $y^{-1} < x^{-1}$ yields a left ordering (see \cite[Chapter 13]{Passman}). However, the case of quandles is not the same. For example, a trivial quandle can be right orderable but not left orderable. Indeed, if $\T = \{ x_1, x_2, \ldots \}$ is a trivial quandle, then it is clear that the linear order $x_1 < x_2 < \cdots$ is preserved under multiplication on the right, but is not preserved under multiplication on the left.
\par

The following result gives examples of some left and right orderable quandles arising from groups.

\begin{proposition}\label{core-orderable}
The following hold for an orderable group $G$:
\begin{enumerate}
\item  $\Conj(G)$ is a right orderable quandle.
\item $\Core(G)$ is a left orderable quandle. 
\item If $\phi \in \Aut(G)$ is an order reversing automorphism, then $\Alex(G, \phi)$ is a left orderable quandle.
\end{enumerate}
\end{proposition}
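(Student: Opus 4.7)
The plan is to equip each of the three quandles with the given linear order on $G$ (the same symbol $<$ used on the underlying set), and then to verify the appropriate orderability condition by direct manipulation inside the ordered group $G$, using the facts that in an orderable group left and right multiplication both preserve $<$, and that consequently inversion reverses $<$ (since $x<y$ implies $e<x^{-1}y$, hence $y^{-1}<x^{-1}$).

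For (1), take the same linear order on $\Conj(G)$ as on $G$. Fix $z \in G$ and suppose $x<y$ in $G$. Left-multiplying by $z^{-1}$ and then right-multiplying by $z$ preserves the inequality, giving $z^{-1}xz < z^{-1}yz$, which is exactly $x*z < y*z$ in $\Conj(G)$; this establishes right orderability.

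For (2), again use the same linear order on $\Core(G)$ as on $G$. Given $x<y$ and any $z$, I would break the desired inequality $z*x < z*y$, i.e.\ $xz^{-1}x < yz^{-1}y$, into two steps. First, right-multiplying $x<y$ by $z^{-1}x$ yields $xz^{-1}x < yz^{-1}x$. Second, left-multiplying $x<y$ by $yz^{-1}$ yields $yz^{-1}x < yz^{-1}y$. Chaining the two gives $z*x < z*y$, so $\Core(G)$ is left orderable.

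For (3), keep the order on $\Alex(G,\phi)$ the same as on $G$, and here the order-reversing hypothesis on $\phi$ is crucial. Given $x<y$ in $G$, inversion gives $y^{-1}<x^{-1}$, and left multiplication by $z$ yields $zy^{-1}<zx^{-1}$; applying the order-reversing $\phi$ flips this to $\phi(zx^{-1}) < \phi(zy^{-1})$. Right-multiplying by $x$ gives $\phi(zx^{-1})x < \phi(zy^{-1})x$, and left-multiplying the original $x<y$ by $\phi(zy^{-1})$ gives $\phi(zy^{-1})x < \phi(zy^{-1})y$. Chaining the two yields $z*x < z*y$ in $\Alex(G,\phi)$, as required.

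The argument is essentially a bookkeeping exercise once the correct order is chosen on each quandle, so there is no real obstacle; the only point that requires some care is (3), where one must track precisely how inversion, left multiplication by $z$, and the order-reversing automorphism $\phi$ combine, and then chain the resulting inequality with $x<y$ itself in order to absorb the different trailing factors $x$ and $y$.
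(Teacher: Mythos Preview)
Your proof is correct and follows essentially the same approach as the paper: equip each quandle with the given bi-invariant order on $G$ and verify the relevant orderability condition by a direct two-step chain of inequalities. The only cosmetic difference is in part (3), where the paper first expands $\phi(zx^{-1})=\phi(z)\phi(x^{-1})$ and passes through the intermediate term $\phi(z)\phi(x^{-1})y$, whereas you keep $\phi(zx^{-1})$ unexpanded and pass through $\phi(zy^{-1})x$; both chains are valid and the argument is the same in substance.
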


\begin{proof}
Let $G$ be an ordered group with order $<$ and $x, y, z \in G$ such that $x < y$. Then $$x*z=z^{-1}xz < z^{-1}yz=y*z$$ implies that $\Conj(G)$ is a right orderable quandle, and
$$z*x= xz^{-1}x <  yz^{-1}x <  yz^{-1}y= z*y$$ implies that $\Core(G)$ is a left orderable quandle. This proves (1) and (2).
\par
For (3), ordering of $G$ and $\phi$ being order reversing implies that $\phi(x)^{-1} < \phi(y)^{-1}$. This gives
$$z*x= \phi(zx^{-1})x=  \phi(z)\phi(x^{-1})x < \phi(z)\phi(x^{-1})y < \phi(z)\phi(y^{-1})y=z*y,$$
which proves that $\Alex(G, \phi)$ is left orderable.
\end{proof}

We recall the construction of the free quandle on a given set (\cite[p.351]{Fenn-Rourke}, \cite{Kamada2012}). Let $S$ be a set and $F(S)$ the free group on $S$. Define
 $$FR(S):= \big\lbrace a^{w} \mid a \in S, w \in F(S) \rbrace $$ with the operation given as $$a^{w} \ast b^{u}:= a^{wu^{-1}bu}.$$
A direct check shows that $FR(S)$ is a free rack on $S$.  The free quandle $FQ(S)$ on $S$ is then defined as a quotient of $FR(S)$ modulo the equivalence relation generated by $$a^{w}= a^{aw}$$ for $a \in S$ and $w \in F(S)$. It is easy to see that $FQ(S)$ is the desired free quandle satisfying the universal property.
If $|S|=n$, we denote $FQ(S)$ by $FQ_n$. With this definition, we have the following result.

\begin{theorem}\label{ordrability-free-quandle}
Free quandles are right orderable and semi-latin.
\end{theorem}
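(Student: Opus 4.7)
The plan is to realise $FQ(S)$ as a concrete subquandle of $\Conj(F(S))$, namely the set of all $F(S)$-conjugates of free generators, and then to deduce both properties from this realisation. The assignment $a^{w} \mapsto w^{-1}aw$ respects the defining relation $a^{w}=a^{aw}$ (both map to $w^{-1}aw$) as well as the rack operation, and it is injective: if two conjugates $w^{-1}aw$ and $u^{-1}bu$ coincide in a free group then $a$ and $b$ must be equal and $wu^{-1}$ must centralise $a$, hence $w$ and $u$ differ by a power of $a$ and the classes $[a^w]$, $[b^u]$ agree in $FQ(S)$. Thus $FQ(S)$ sits as a subquandle of $\Conj(F(S))$.

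For right orderability, free groups are classically known to be bi-orderable, so $F(S)$ is in particular right orderable. By Proposition \ref{core-orderable}(1), $\Conj(F(S))$ is then a right orderable quandle, and the restriction of its linear order to the subquandle $FQ(S)$ is still a total order that is preserved under right multiplication by elements of $FQ(S)$. Hence $FQ(S)$ is right orderable.

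For the semi-latin property, suppose $x, y_1, y_2 \in FQ(S)$ satisfy $L_x(y_1)=L_x(y_2)$, i.e.\ $y_1^{-1}xy_1 = y_2^{-1}xy_2$ in $F(S)$. Rearranging shows $y_1y_2^{-1}$ lies in $C_{F(S)}(x)$, and since $x$ is conjugate to a free generator its centraliser is the infinite cyclic group $\langle x\rangle$. Thus $y_1 = x^{k} y_2$ for some $k \in \mathbb{Z}$, and it suffices to show $k=0$. Passing to the abelianisation $F(S) \twoheadrightarrow \mathbb{Z}^{S}$, each of $y_1, y_2, x$ maps to a basis vector $e_{s_1}, e_{s_2}, e_{a}$ indexed by its underlying generator, so the relation becomes
$$
e_{s_1} - e_{s_2} \;=\; k\,e_{a} \quad \text{in } \mathbb{Z}^{S}.
$$
Comparing coordinates forces $k=0$ (and $s_1 = s_2$), so $y_1 = y_2$ in $F(S)$, hence in $FQ(S)$.

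The main obstacle is the semi-latin step: $L_x$ is visibly not injective on all of $\Conj(F(S))$, so the argument must exploit the special position of $FQ(S)$ inside $\Conj(F(S))$, namely that its elements are conjugates of single generators and therefore have tightly controlled centralisers and particularly simple images in the abelianisation. The right orderability statement, by contrast, is essentially a free consequence of Proposition \ref{core-orderable}(1) combined with the classical orderability of free groups.
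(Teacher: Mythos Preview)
Your proof is correct and follows the same approach as the paper: realise $FQ(S)$ inside $\Conj(F(S))$, inherit right orderability from the bi-orderability of free groups via Proposition~\ref{core-orderable}(1), and establish the semi-latin property by combining the cyclicity of centralisers of conjugates of generators in $F(S)$ with an abelianisation argument. The paper carries out the semi-latin computation directly in the $a^{w}$ presentation and rederives the centraliser fact in-line, whereas you set up the embedding explicitly and invoke the centraliser description as a known property of free groups; the mathematical content is the same.
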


\begin{proof}
It is known that the free group $F_n = \langle x_1, x_2, \ldots, x_n \rangle$ is orderable  \cite{Vinogradov, Deroin}. Consequently, $\Conj(F_n)$ is right orderable by  Proposition \ref{core-orderable}, and hence the free quandle $FQ_n$ is right orderable being a subquandle of $\Conj(F_n)$.
\par
Let us prove that $FQ_n$ is semi-latin. If $n=1$, then $FQ_n$ is the one-element trivial quandle and assertion is evident. Suppose that $n > 1$ and there are elements $x, y, z \in FQ_n$ such that $x \not= y$ and $z * x = z * y$. Using the interpretation of elements of $FQ_n$ as elements in $F_n$, we can assume that
$$
z = x_i^{z_0},~~x = x_j^{x_0},~~y = x_k^{y_0},~~z_0, x_0, y_0 \in F_n,
$$
where $a^b := b^{-1} a b$. The identity $z * x = z * y$ gives the equality
$$
x_i^{z_0 x_0^{-1} x_j x_0} = x_i^{z_0 y_0^{-1} x_k y_0}
$$
in the free group $F_n$, which is equivalent to
$$
x_i^{z_0 x_0^{-1} x_j x_0 y_0^{-1} x_k^{-1}  y_0 z_0^{-1}} = x_i.
$$
But it is possible in $F_n$ if and only if 
\begin{equation} \label{e1}
z_0 x_0^{-1} x_j x_0 y_0^{-1} x_k^{-1}  y_0 z_0^{-1} = x_i^{\alpha}
\end{equation}
for some integer $\alpha$. Take the quotient of $F_n$ by its commutator subgroup, the previous equality gives
$$
\overline{x_j} \cdot \overline{x_k}^{-1} = \overline{x_i}^{\alpha},
$$
where $\overline{x_j},  \overline{x_k},  \overline{x_i}$ are the generators of the free abelian group $F_n/F_n'$. Hence, $j = k$ and $\alpha = 0$. Thus, (\ref{e1}) has the form
$$
z_0 x_0^{-1} x_j x_0 y_0^{-1} x_j^{-1}  y_0 z_0^{-1} = 1.
$$
Conjugating both sides by $z_0$ gives
$$
x_0^{-1} x_j x_0 y_0^{-1} x_j^{-1}  y_0 = 1,
$$
or
$$
[x_j^{-1}, y_0 x_0^{-1}] = 1,
$$
where $[a, b] = a^{-1} b^{-1} ab$. This equality holds if and only if $y_0 x_0^{-1} = x_j^{\beta}$ for some integer $\beta$, i.e. $y_0 = x_j^{\beta} x_0$.
Hence, the elements $z, x, y$ have the form
$$
z = x_i^{z_0},~~ x = x_j^{x_0},~~ y= x_j^{x_0},
$$
i.e. $x = y$. This contradiction proves that $FQ_n$ is a semi-latin quandle.
\end{proof}

It is interesting to have an answer to the following question.

\begin{question}
Does there exists an infinite non-commutative bi-orderable quandle?
\end{question}

It is easy to see that a right or left orderable group must be infinite. But, this is not true for quandles since any finite trivial quandle is right orderable. However, the following properties hold.

\begin{proposition}\label{orderable-implies-latin}
Let $Q$ be a quandle. Then the following hold:
\begin{enumerate}
\item If $Q$ is  right orderable, then the $\langle S_y \rangle $-orbit of  $x$ is infinite for all $x, y \in Q$ with $S_y(x) \ne x$.
\item If $Q$ is  left orderable, then it is semi-latin and the set $\{L_y^n(x)\}_{n=0, 1, \ldots}$ is infinite for $x \ne y \in Q$, where
$$
L_y^0(x) = x,~~L_y^{i+1}(x) = y * (L_y^{i}(x)),~~i = 0, 1, \ldots.
$$
\end{enumerate}
\end{proposition}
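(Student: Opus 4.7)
The plan is to exploit the fact that left or right orderability forces the translations $S_y$ and $L_y$ to be strictly order-preserving maps on a totally ordered set, and then observe that any strictly order-preserving self-map has only infinite orbits once two distinct points lie in the same orbit.

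For (1), right orderability directly says that $a < b$ implies $a y < b y$, i.e.\ each $S_y$ is a strictly order-preserving bijection (the bijectivity coming from axiom (Q2)). Given $S_y(x) \neq x$, one of $x < S_y(x)$ or $S_y(x) < x$ holds; without loss of generality take the first. Applying $S_y$ repeatedly yields a strictly increasing chain
$$x < S_y(x) < S_y^2(x) < \cdots,$$
and all terms are distinct, so the $\langle S_y\rangle$-orbit of $x$ is infinite.

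For (2), left orderability makes each $L_z$ strictly order preserving, and a strictly order-preserving map on a totally ordered set is automatically injective, proving the semi-latin claim. For the second assertion I need a small quandle-theoretic observation that does not use the ordering at all: if $y * x = x$, then $S_x(y) = x = x * x = S_x(x)$, and the uniqueness clause in (Q2) forces $y = x$. Hence, as soon as $x \neq y$, we have $L_y(x) \neq x$, so either $x < L_y(x)$ or $L_y(x) < x$. Iterating the strictly order-preserving bijection $L_y$ then gives a strictly monotonic sequence $\{L_y^n(x)\}_{n\ge 0}$, whose entries are pairwise distinct.

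The only non-routine point is the brief lemma that $y*x=x$ implies $y=x$ in any quandle; without it, the orbit in (2) could collapse to a fixed point and the claim would fail. After this observation, both parts reduce to the standard fact that a strictly order-preserving self-map cannot admit a finite orbit containing more than one element.
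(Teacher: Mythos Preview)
Your proof is correct and follows essentially the same route as the paper's: both parts reduce to the observation that a strictly order-preserving self-map on a totally ordered set produces strictly monotone (hence infinite) forward orbits, together with the same small quandle lemma that $y*x = x$ forces $y = x$ via the bijectivity of $S_x$ from axiom (Q2). One minor slip: you call $L_y$ a bijection, but left orderability (equivalently, the semi-latin property just established) only guarantees injectivity of $L_y$; this is harmless since your argument only iterates $L_y$ forward.
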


\begin{proof}
If $x<S_y(x)$ and the $\langle S_y \rangle$-orbit of $x$ is finite, then right orderability of $Q$ implies that  $$x <  S_y(x) <  S_y^2(x) <  \cdots <  S_y^n(x)=x$$ for some integer $n$, which is a contradiction. Similarly, the assertion follows if $S_y(x) < x$.
\par

Suppose that there are elements $x, y, z \in Q$ with $y\neq z$, say $y < z$, such that $x * y = x * z$. This is a contradiction to left orderability of $Q$, and hence $Q$ must be semi-latin. Further, if $x \ne y$ are two elements of $Q$ such that $x = L_y(x)$, then $x * x = y * x$, which contradicts the second quandle axiom. Hence, $x<L_y(x)$ or $L_y(x) < x$. Suppose that $x<L_y(x)$. Since $Q$ is left orderable, we have 
$$
x < L_y(x) < L_y^2(x) < \cdots < L_y^n(x)< \cdots,
$$
and hence $\{L_y^n(x)\}_{n=0, 1, \ldots}$ is infinite. The case $L_y(x) < x$ is similar.
\end{proof}

\begin{corollary}
If $G$ is a non-trivial group, then $\Conj(G)$ is not left orderable and $\Core(G)$ is not right orderable.
\end{corollary}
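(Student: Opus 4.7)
My plan is to derive both non-orderability statements as direct applications of Proposition~\ref{orderable-implies-latin}, which in each case gives a structural obstruction coming from the inner maps.

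For the assertion about $\Conj(G)$, I would invoke part~(2): any left orderable quandle is semi-latin. It therefore suffices to exhibit an element $a \in G$ for which left multiplication $L_a(b) = b^{-1} a b$ fails to be injective. Since $a$ is fixed under conjugation by itself and by the identity, any non-identity $a$ yields $L_a(e) = a = L_a(a)$ with $e \ne a$; the hypothesis that $G$ is non-trivial guarantees such an $a$ exists, and semi-latin-ness fails, ruling out any left order.

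For $\Core(G)$, I would apply part~(1): a right orderable quandle cannot possess a non-trivial finite $\langle S_y \rangle$-orbit. The key structural observation is that $\Core(G)$ is involutory, as the one-line computation $S_y^2(x) = y(y x^{-1} y)^{-1} y = x$ makes clear, so every $\langle S_y \rangle$-orbit has size at most two. It only remains to produce a pair with $S_y(x) \ne x$; choosing $y = e$ reduces this to $S_e(x) = x^{-1} \ne x$, which holds for any $x \in G$ of order greater than two.

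The main subtlety, and essentially the only one in the plan, is the edge case in which $G$ has exponent two: there $\Core(G)$ degenerates to a trivial quandle (which is right orderable), so the statement tacitly presumes $G$ contains some element of order $>2$. In every other non-trivial $G$ the previous paragraph produces the desired two-element $\langle S_e \rangle$-orbit and the proposition closes the argument. Apart from flagging this case, no nontrivial step is required; the proof is driven entirely by the involutivity of the core and the non-injectivity of conjugation on fixed points.
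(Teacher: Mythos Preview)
Your approach is the same as the paper's: both parts are immediate from Proposition~\ref{orderable-implies-latin}, using that $\Conj(G)$ fails to be semi-latin and that $\Core(G)$ is involutory. Your observation about the exponent-$2$ edge case is well taken and in fact sharper than the paper, which simply asserts the corollary without this caveat; as you note, $\Core(G)$ is then a trivial quandle with more than one element and hence right orderable, so the statement as written is false in that case.
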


\begin{proof}
Since $\Conj(G)$ is not semi-latin and $\Core(G)$ is involutary, the assertions follow from Proposition \ref{orderable-implies-latin}.
\end{proof}

If $\phi \in \Aut(G)$ is an involution, then $\Alex(G, \phi)$ is involutary and we obtain

\begin{corollary}
If $G$ is a non-trivial group and $\phi \in \Aut(G)$ an involution, then the quandle $\Alex(G, \phi)$ is not right orderable.
\end{corollary}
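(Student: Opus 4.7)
The plan is to mimic the proof of the previous corollary for $\Core(G)$: establish that $\Alex(G,\phi)$ is involutary, produce a pair $(x,y)$ with $S_y(x) \neq x$, and then invoke Proposition \ref{orderable-implies-latin}(1) to get a contradiction with right orderability.

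First I would verify that $\Alex(G,\phi)$ is involutary. Since the operation is $a * b = \phi(ab^{-1})b$, the inner maps are $S_y(x) = \phi(xy^{-1})y$. Using $\phi^2 = \id_G$, a direct computation gives
\[
S_y^2(x) = \phi\!\left(\phi(xy^{-1})y \cdot y^{-1}\right)y = \phi^2(xy^{-1})\,y = xy^{-1}y = x,
\]
so every $\langle S_y \rangle$-orbit in $\Alex(G,\phi)$ has cardinality at most $2$, hence is finite.

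Next I would exhibit a pair with $S_y(x) \neq x$. Reading "involution" as an automorphism of order exactly $2$ (the case $\phi = \id_G$ must be excluded, since then $\Alex(G,\id_G)$ is the trivial quandle on $G$ and is right orderable under any linear order of $G$), we may fix $g \in G$ with $\phi(g) \neq g$. Take $y = 1_G$ and $x = g$; then $S_y(x) = \phi(g) \cdot 1_G = \phi(g) \neq g = x$. If $\Alex(G,\phi)$ were right orderable, Proposition \ref{orderable-implies-latin}(1) would force the $\langle S_y \rangle$-orbit of $x$ to be infinite, contradicting the finiteness established above.

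There is no real obstacle here: the only substantive point is noting that the hypothesis forces existence of a non-fixed point of $\phi$, which is what makes the quandle operation genuinely non-trivial on some pair, so that Proposition \ref{orderable-implies-latin}(1) has a non-vacuous input. Everything else is a short formal calculation, closely parallel to the treatment of $\Core(G)$.
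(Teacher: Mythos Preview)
Your proof is correct and follows essentially the same approach as the paper: the paper simply remarks that $\Alex(G,\phi)$ is involutary when $\phi$ is an involution and then invokes Proposition \ref{orderable-implies-latin}(1), exactly as in the $\Core(G)$ case. Your version supplies the explicit computation of $S_y^2$ and, more usefully, makes explicit the point that ``involution'' must mean $\phi \neq \id_G$ (so that some $g$ with $\phi(g)\neq g$ exists), a detail the paper leaves implicit.
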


It is known that the  group ring of a right orderable group has no zero-divisors \cite[Chapter 13]{Passman}. On the other hand, a trivial quandle  with more than one element is right orderable and its quandle ring always has zero-divisors. However, for semi-latin quandles we have the following result, which is a quandle analogue of \cite[Chapter 13, Lemma 1.7]{Passman} and also answers Question \ref{Elhamdadi-question}.

\begin{proposition} \label{orderable-tup}
Let $Q$ be a semi-latin quandle. If $Q$ is right or left orderable, then $Q$ is a t.u.p-quandle. In fact, if $A$ and $B$ are non-empty  finite subsets of $Q$, then there exist $b', b'' \in B$ such that the product $a_{max} b'$ and $a_{min} b''$ are uniquely represented in $A B$, where $a_{max}$ denotes the largest element in $A$ and $a_{min}$ the smallest.
\end{proposition}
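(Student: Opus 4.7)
My plan is to produce, in the right-orderable case, two distinct uniquely represented products in $AB$, thereby establishing the ``In fact'' clause and hence the t.u.p.-property; the left-orderable case follows from a symmetric version of the same argument. The strategy rests on two injectivity facts working in tandem: right-orderability makes every right translation $S_b$ order-preserving and, being an automorphism, already injective, while the semi-latin hypothesis is exactly the statement that every left translation $L_a$ is injective.

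Assume first that $Q$ is right-orderable. For each $b \in B$ the map $a \mapsto ab$ is order-preserving, so $a_{max} b = \max_{a \in A}(ab)$. I would then choose $b' \in B$ to maximise $a_{max} b$ over $b \in B$; by construction $a_{max} b' = \max(AB)$. To verify uniqueness of this representation, suppose $ab = a_{max} b'$ for some $(a,b) \in A \times B$. The chain $ab \le a_{max} b \le a_{max} b'$ must then be an equality throughout: the first equality forces $a = a_{max}$ by injectivity of $S_b$, and the second forces $b = b'$ by injectivity of $L_{a_{max}}$ (the semi-latin hypothesis). A parallel argument with $b'' \in B$ chosen to minimise $a_{min} b$ shows that $a_{min} b'' = \min(AB)$ is also uniquely represented.

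It then remains to check $a_{max} b' \ne a_{min} b''$ whenever $|A| + |B| > 2$. If $|A| \ge 2$, right-orderability gives $a_{min} b' < a_{max} b'$, and $a_{min} b'' \le a_{min} b'$ by the choice of $b''$, so the two products differ. If $|A| = 1$ and $|B| \ge 2$, then both products lie in $a_{max} B$, which has cardinality $|B| \ge 2$ by injectivity of $L_{a_{max}}$, and they are the two extremes of that set. The left-orderable case is handled dually: $L_a$ becomes order-preserving and $S_b$ remains an automorphism, so one produces $a', a'' \in A$ such that $a' b_{max}$ and $a'' b_{min}$ are uniquely represented in $AB$, and the analogous distinctness check again yields the t.u.p.-property.

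The point I would watch most carefully is to apply each injectivity statement to the correct coordinate of the product: the automorphism property of $S_b$ (respectively $L_a$) handles one factor, and the hypothesis of the other side genuinely needs the semi-latin condition. This is essential, since a trivial quandle with more than one element is right-orderable but has $L_x$ constant, and correspondingly fails to be a t.u.p-quandle; removing semi-latinness would therefore invalidate the whole line of reasoning at the second equality in the chain.
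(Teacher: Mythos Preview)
Your argument is correct and follows essentially the same route as the paper: both proofs select $b'$ (respectively $b''$) so that $a_{\max}b'$ (respectively $a_{\min}b''$) is extremal in $AB$, and then deduce uniqueness from the two injectivity facts---right orderability making $S_b$ strictly monotone, and the semi-latin hypothesis making $L_{a_{\max}}$ injective. Your treatment is in fact slightly more complete than the paper's, since you explicitly verify that $a_{\max}b' \ne a_{\min}b''$ in both cases $|A|\ge 2$ and $|A|=1,\ |B|\ge 2$, whereas the paper tacitly assumes $|A|\ge 2$.
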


\begin{proof} Suppose that $Q$ is a semi-latin and right orderable quandle. Let
$$
A = \{a_{min} = a_1 < a_2 < \cdots < a_n = a_{max} \},~~n \geq 2,
$$ 
and 
$$
B = \{ b_1 < b_2 < \cdots < b_m \}
$$ 
be two finite subsets of $Q$. We write the elements of the product $A B$ in the tabular form

$$
\begin{array}{ccccccc}
a_1 b_1 & < & a_2 b_1 & < & \cdots & < & a_n b_1, \\
a_1 b_2 & < & a_2 b_2 & < & \cdots & < & a_n b_2, \\
\vdots & \vdots & \vdots & \vdots & \vdots & \vdots & \vdots \\
a_1 b_m & < & a_2 b_m & < & \cdots & < & a_n b_m, \\
\end{array}
$$
where the inequalities in the rows follow from the right ordering of $Q$. Since $Q$ is semi-latin, it follows that all  the entries in each column are distinct.
\par
Let $b_i \in B$ be the element  such that $a_1 b_i$ is the minimal element in the first column.  Let us prove that we can take $b'' = b_i$, i.e.
$a_{min} b'' = a_1 b_i$ is uniquely represented in $A B$. It suffices to prove that $a_1 b_i < a_k b_l$ for any pair $(k, l) \not= (1, i)$. If $k=1$, then 
the inequality $a_1 b_i < a_1 b_l$, $l \not= i$, follows from the choice of $b_i$. If $k >1$, then $a_1 b_i \leq a_1 b_l$ and the inequalities in the $l$-th row  imply that $a_1 b_i < a_k b_l$.
\par
Let $b_j \in B$ be the element such that $a_n b_j$ is the maximal element in the last column. We prove that one can take $b' = b_j$, that is, $a_k b_l < a_n b_j$ for each $(k, l) \not= (n, j)$. If $k = n$, then the inequality follows from the choice of $b_j$. If $k < n$, then  inequalities in the $l$-th row gives
$$
a_k b_l < a_n b_l \leq a_n b_j.
$$
Hence, the product $a_{max} b' = a_n b_j$ is  uniquely represented in $A B$. The case when $Q$ is left orderable is similar.
\end{proof}

Propositions \ref{up-zero} and \ref{orderable-tup}  together yield the following result.

\begin{theorem} \label{orderable-latin}
Let  $Q$ be a semi-latin quandle. If $Q$ is right or left orderable, then $R[Q]$ has no zero-divisors.
\end{theorem}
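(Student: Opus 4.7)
The plan is to chain together the two preceding propositions, since essentially all of the combinatorial work has already been done. Given a semi-latin, right or left orderable quandle $Q$, Proposition \ref{orderable-tup} tells us that $Q$ is a t.u.p.-quandle: every pair of non-empty finite subsets $A, B \subset Q$ of total size exceeding $2$ produces at least two uniquely represented products in $AB$. In particular, every such pair yields at least one uniquely represented product, so $Q$ is automatically a up-quandle. Proposition \ref{up-zero} then guarantees that $R[Q]$ has no zero-divisors over the integral domain $R$.

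Concretely, I would write the argument in one short paragraph: pick non-zero elements $u = \sum_i \alpha_i x_i$ and $v = \sum_j \beta_j y_j$ of $R[Q]$ with $\alpha_i, \beta_j \in R \setminus \{0\}$ and $A = \{x_i\}$, $B = \{y_j\}$ finite subsets of $Q$. By Proposition \ref{orderable-tup} there exists $(a, b) \in A \times B$ such that the product $ab$ admits a unique representation in $AB$; consequently the coefficient of $ab$ in $uv$ is the single non-zero term $\alpha_a \beta_b$, which is non-zero because $R$ is an integral domain. Hence $uv \neq 0$, and $R[Q]$ is free of zero-divisors.

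There is no genuine obstacle left at this stage: the two structural ingredients (the combinatorial t.u.p. property for ordered semi-latin quandles, and the passage from t.u.p. to absence of zero-divisors) have both been established earlier in the section, and the only remaining step is the tautological observation that t.u.p.\ implies up. If anything, the one point worth flagging explicitly in the write-up is why the degenerate case $|A| + |B| \leq 2$ causes no trouble: in that case both $u$ and $v$ are single non-zero scalar multiples of quandle elements, and $uv = \alpha\beta(xy) \neq 0$ directly, so the t.u.p.\ hypothesis is not even needed.
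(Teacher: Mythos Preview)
Your proposal is correct and matches the paper's own argument exactly: the paper simply states that Propositions~\ref{up-zero} and~\ref{orderable-tup} together yield the theorem, which is precisely the chain you describe. Your additional remark about the degenerate case $|A|+|B|\leq 2$ is a nice clarification, though the paper does not bother to spell it out.
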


Theorem \ref{ordrability-free-quandle} and Theorem \ref{orderable-latin}  leads to the following result.

\begin{corollary}\label{free-quandle-zero-divisors}
Quandle rings of free quandles have no  zero-divisors.
\end{corollary}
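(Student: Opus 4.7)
The corollary is a direct specialization of the two main results that immediately precede it, so the plan is simply to splice them together. By Theorem \ref{ordrability-free-quandle}, any free quandle $FQ(S)$ is simultaneously right orderable and semi-latin. These are precisely the two hypotheses required by Theorem \ref{orderable-latin}, whose conclusion is that the quandle ring over any integral domain $R$ has no zero-divisors. Applying Theorem \ref{orderable-latin} to $Q = FQ(S)$ yields the corollary in one line.

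The only subtlety worth flagging is that Theorem \ref{ordrability-free-quandle} is proved in the excerpt for the finitely generated free quandle $FQ_n$, whereas the corollary is stated for free quandles in general. I would address this by a standard reduction: if $u, v \in R[FQ(S)]$ are nonzero with $uv = 0$, then the supports of $u$ and $v$ together involve only finitely many generators, hence lie in a subquandle isomorphic to some $FQ_n$; since $R[FQ_n]$ sits inside $R[FQ(S)]$ as an $R$-subalgebra, a zero-divisor relation in $R[FQ(S)]$ would descend to one in $R[FQ_n]$, contradicting the finitely generated case.

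There is no real obstacle to overcome here — all the substantive work has been done in Theorem \ref{ordrability-free-quandle} (orderability via $\Conj(F_n)$ and semi-latinness via the commutator-quotient computation in the free group) and in Theorem \ref{orderable-latin} (t.u.p.\ via Proposition \ref{orderable-tup} and then Proposition \ref{up-zero}). The corollary itself is a formal consequence and does not require any new argument beyond the one-sentence citation above together with the routine finite-support reduction.
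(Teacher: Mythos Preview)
Your proposal is correct and follows exactly the paper's approach: the paper's proof of this corollary is literally the one-line citation of Theorem~\ref{ordrability-free-quandle} and Theorem~\ref{orderable-latin}. Your additional finite-support reduction to handle the passage from $FQ_n$ to $FQ(S)$ is a reasonable remark, though the paper itself does not make it explicit (implicitly relying on the fact that the proof of Theorem~\ref{ordrability-free-quandle} goes through verbatim for free groups on arbitrary sets, all of which are orderable).
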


As a consequence of Proposition \ref{core-orderable}, Proposition \ref{orderable-implies-latin}(2) and Theorem \ref{orderable-latin}, we have the following results.

\begin{corollary}\label{core-zero-divisors}
If $G$ is an orderable group, then the quandle ring $R[\Core(G)]$ has no  zero-divisors.
\end{corollary}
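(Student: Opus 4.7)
The plan is to chain together three results already established in the paper, so the proof should be essentially a one-paragraph verification. The strategy is: show $\Core(G)$ is left orderable, deduce it is semi-latin, and then invoke the main zero-divisor theorem for semi-latin orderable quandles.

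First I would apply Proposition \ref{core-orderable}(2), which says that whenever $G$ is (bi-)orderable, $\Core(G)$ is a left orderable quandle under the same order; in particular, since $G$ is given to be orderable, the linear order on $G$ makes $\Core(G)$ left orderable. Next, Proposition \ref{orderable-implies-latin}(2) tells us that every left orderable quandle is automatically semi-latin, so $\Core(G)$ is semi-latin. Finally, Theorem \ref{orderable-latin} states precisely that a semi-latin quandle which is either right or left orderable has quandle ring without zero-divisors over any integral domain $R$; applying this to $Q = \Core(G)$ yields the conclusion that $R[\Core(G)]$ has no zero-divisors.

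There is essentially no obstacle to overcome: the hard work has already been done in Propositions \ref{core-orderable} and \ref{orderable-implies-latin} and in Theorem \ref{orderable-latin}. The only thing worth double-checking is that the hypothesis in Theorem \ref{orderable-latin} matches what we have, namely semi-latinness plus one-sided orderability, which indeed it does. Thus the entire proof reduces to the single line: apply Proposition \ref{core-orderable}(2), Proposition \ref{orderable-implies-latin}(2), and Theorem \ref{orderable-latin} in sequence.
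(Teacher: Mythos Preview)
Your proof is correct and follows exactly the same route as the paper, which derives the corollary directly from Proposition \ref{core-orderable}(2), Proposition \ref{orderable-implies-latin}(2), and Theorem \ref{orderable-latin}.
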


\begin{corollary}\label{alex-zero-divisors}
If $G$ is an orderable group and $\phi \in \Aut(G)$ an order reversing automorphism, then the quandle ring $R[\Alex(G, \phi)]$ has no  zero-divisors.
\end{corollary}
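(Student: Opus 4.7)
The plan is to deduce this corollary as a direct chain of three earlier results, since the substantive work has already been packaged into them. First, I would apply Proposition \ref{core-orderable}(3), whose hypothesis matches the corollary exactly ($G$ orderable and $\phi \in \Aut(G)$ order reversing) and whose conclusion is that $\Alex(G, \phi)$ is left orderable.

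Next, I would invoke Proposition \ref{orderable-implies-latin}(2) to conclude that $\Alex(G, \phi)$ is semi-latin, since every left orderable quandle is semi-latin. Finally, Theorem \ref{orderable-latin} asserts that the quandle ring of a semi-latin, left or right orderable quandle has no zero-divisors; applying it to $\Alex(G, \phi)$ completes the argument.

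I do not expect a genuine obstacle here: the corollary is essentially a three-step concatenation of results already in place, and is advertised in the paper as a consequence of Propositions \ref{core-orderable}(3) and \ref{orderable-implies-latin}(2) and Theorem \ref{orderable-latin}. A conceptual remark worth making is that the order reversing hypothesis on $\phi$ is precisely what forces $\phi$ to be fixed-point free on $G \setminus \{1\}$: if $g \ne 1$, then $g$ and $1$ are comparable in one direction while $\phi(g)$ and $\phi(1)=1$ sit in the opposite direction, so $\phi(g) \ne g$. This is the structural reason the semi-latin condition is available on $\Alex(G, \phi)$ in the first place, although one can, as above, extract semi-latin-ness for free from left orderability via Proposition \ref{orderable-implies-latin}(2) without verifying it directly.
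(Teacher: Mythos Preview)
Your proposal is correct and follows exactly the paper's own derivation: the paper states this corollary (together with the preceding one) as a consequence of Proposition~\ref{core-orderable}, Proposition~\ref{orderable-implies-latin}(2), and Theorem~\ref{orderable-latin}, in precisely the order you describe. Your additional remark about order reversal forcing $\phi$ to be fixed-point free is a nice observation but is not needed, as you note, since semi-latin-ness already comes for free from left orderability.
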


\par
Proposition \ref{two different elements} and Theorem \ref{orderable-latin} suggest the following analogue of Kaplansky's zero-divisor conjecture for quandles.

\begin{conjecture}\label{kaplansky-analogue}
Let $R$ be an integral domain and $Q$ a non-inert semi-latin quandle.  Then the quandle ring $R[Q]$ has no zero-divisors.
\end{conjecture}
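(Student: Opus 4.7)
The plan is to attempt the natural strengthening: show that a non-inert semi-latin quandle is automatically a u.p.-quandle, after which Proposition \ref{up-zero} yields the conjecture. Given non-empty finite subsets $A, B \subseteq Q$, I would study the $|A| \times |B|$ product ``table'' $P_{ij} = a_i b_j$. Two structural observations come for free. Axiom (Q2) makes each right translation $R_{b_j} : a \mapsto a b_j$ a bijection of $Q$, so every column of $P$ consists of distinct entries; and the semi-latin hypothesis makes each $L_{a_i} : b \mapsto a_i b$ injective, so every row of $P$ consists of distinct entries. Thus $P$ is a ``Latin rectangle'' inside $Q$, heavily constraining how repetitions in $AB$ can occur.

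The non-inert condition enters as a pairwise comparison of columns: for any two distinct $b_j, b_k \in B$ we have $A b_j \neq A b_k$, and therefore some entry $a_i b_j$ fails to lie in the column $A b_k$. I would first try to chain such avoidance witnesses column-by-column to locate a position $(i_0, j_0)$ whose entry avoids \emph{every} other column simultaneously; then $a_{i_0} b_{j_0}$ would be uniquely represented in the product $AB$. A dual row-based argument would handle the symmetric situation whenever a left analogue of non-inert applies.

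The main obstacle is precisely the gap between the pairwise non-inert data and the global unique-representation condition demanded by u.p.: the avoidance witnesses supplied for different pairs of columns need not be compatible, and a naive iteration may exhaust the available rows before isolating a globally unique entry. To bridge this gap I would look for a natural partial order or stratification on a non-inert semi-latin quandle --- perhaps built from the $\Inn(Q)$-orbit structure, or from iterated right translations in the spirit of Proposition \ref{orderable-implies-latin} --- that could play the role of the linear order in the proof of Proposition \ref{orderable-tup}; any such order compatible with right multiplication would upgrade non-inert to u.p.\ at once. A parallel line of attack is a minimal-counterexample argument on an annihilating pair $uv = 0$, combining the Latin-rectangle structure with the non-inert cancellation witnesses to produce a strictly smaller annihilating pair. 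Since the analogous Kaplansky conjecture for groups has resisted decades of attack with much richer algebraic tools available, this combinatorial passage from pairwise data to a globally unique product is where I expect the real difficulty to lie, and its resolution may well require ordering-theoretic or cohomological ideas that go beyond the bare pairwise hypothesis in the statement.
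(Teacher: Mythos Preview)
The statement you are attempting to prove is labelled \emph{Conjecture} in the paper, and indeed the paper offers no proof of it; it is proposed as an open problem analogous to Kaplansky's zero-divisor conjecture for group rings. So there is no ``paper's own proof'' to compare against, and any complete argument you produce would be new mathematics.

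As for the proposal itself, you have correctly isolated the relevant structural facts---rack axiom (Q2) forces distinct entries in each column of the product table, the semi-latin hypothesis forces distinct entries in each row, and non-inertness says that no two columns coincide as sets---and you have also correctly identified where the argument stalls: passing from the pairwise column comparisons supplied by non-inertness to a single globally uniquely represented product is exactly the missing step, and nothing in the proposal closes that gap. The ``chaining'' idea is suggestive but, as you note, the avoidance witnesses for different column pairs need not be compatible, and there is no evident mechanism (absent an actual linear order as in Proposition~\ref{orderable-tup}) that forces them to cohere. Your final paragraph is an honest acknowledgement that what remains is essentially the hard core of a Kaplansky-type problem; that assessment is accurate, and the proposal should be read as a research outline rather than a proof.
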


It is known that all link groups are left orderable \cite{Boyer-Rolfsen-Wiest}, whereas not all knot groups are bi-orderable \cite{Perron-Rolfsen}. For example,  the group of the figure-eight knot is bi-orderable and the group of a non-trivial cable of an arbitrary knot is not bi-orderable. Since knot quandles are deeply related to knot groups,  the following problem seems interesting.

\begin{problem}\label{link-quandles-orderable}
Determine whether link quandles are left or right orderable.
\end{problem}

\medskip

\section{Idempotents in quandle rings}\label{sec-idempotents}
The computation of idempotents is an important problem in ring theory. The study of idempotents in quandle rings is also motivated by the search for new  quandles  contained in quandle rings. To compute the set $I(R[Q])$ of non-zero idempotents in a given quandle ring $R[Q]$, we begin with some general observations. First notice that each quandle element is, by definition,  an idempotent in its quandle ring and we  refer to them as {\it trivial idempotents}. 

It is well-known that integral group rings do not have non-trivial idempotents (see \cite[p. 123]{Kaplansky} or \cite[p. 38]{Passman}). In sharp contrast, in extended quandle rings, the identity element is trivially an idempotent, and therefore the elements $e-x$, with $x\in Q$, too are idempotents. 

Since the augmentation map $\varepsilon : R[Q] \to R$  is a ring homomorphism, it maps idempotent in $R[Q]$ to idempotents in $R$. Since $R$ is an integral domain, $\varepsilon(z) = 0$ or $\varepsilon(z) = 1$ for each idempotent $z$ of $R[Q]$. In the first case $z \in \Delta_R (Q)$, and in the second case $z = x + \delta$ for some $x \in X$ and $\delta \in \Delta_R (Q)$. 

\begin{proposition} \label{trivial-quandle-idempotents}
If $\T$ is a trivial quandle, then $ I(R[\T]) =  x_0 + \Delta_R (\T)$, where  $x_0 \in \T$ is a fixed element.
\end{proposition}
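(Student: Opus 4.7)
The plan is to exploit the very simple multiplicative structure of $R[\T]$ induced by the triviality of $\T$. Since $x_i x_j = x_i$ for all $x_i, x_j \in \T$, expanding the product of two generic elements gives
$$
\Bigl(\sum_i \alpha_i x_i\Bigr)\Bigl(\sum_j \beta_j x_j\Bigr) \;=\; \sum_{i,j}\alpha_i \beta_j\, x_i \;=\; \Bigl(\sum_j \beta_j\Bigr) \sum_i \alpha_i x_i,
$$
so for all $u, v \in R[\T]$ we have the identity $uv = \varepsilon(v)\, u$. This is the single computational input needed.

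From this identity, every idempotent $z \in R[\T]$ satisfies $z = z^2 = \varepsilon(z)\, z$, i.e. $(\varepsilon(z)-1)\, z = 0$. Because $R[\T]$ is a free $R$-module on $\T$ and $R$ is an integral domain, scalar multiplication by any non-zero element of $R$ acts faithfully, so $z \neq 0$ forces $\varepsilon(z) = 1$. Therefore $z \in x_0 + \Delta_R(\T)$ for any fixed $x_0 \in \T$, since the cosets of $\Delta_R(\T) = \ker \varepsilon$ are determined by the value of $\varepsilon$.

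Conversely, I would verify that every element $z = x_0 + \delta$ with $\delta \in \Delta_R(\T)$ is a non-zero idempotent. Non-vanishing is immediate from $\varepsilon(z) = 1 \neq 0$, and the identity $uv = \varepsilon(v)\, u$ gives $z^2 = \varepsilon(z)\, z = z$. Combining the two directions yields the desired equality $I(R[\T]) = x_0 + \Delta_R(\T)$.

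There is no real obstacle here: the argument is essentially the observation that $R[\T]$ becomes a near-trivial algebra in which right multiplication by $v$ collapses to scalar multiplication by $\varepsilon(v)$. The only small point requiring care is justifying that $(\varepsilon(z)-1)z = 0$ and $z \neq 0$ imply $\varepsilon(z) = 1$, which uses that $R$ is an integral domain and $\T$ is a free $R$-basis of $R[\T]$.
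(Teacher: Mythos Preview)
Your proof is correct. The core idea coincides with the paper's, but your packaging is cleaner: you distill the multiplication in $R[\T]$ into the single identity $uv = \varepsilon(v)\,u$, from which both inclusions follow in one line. The paper instead handles the forward inclusion by invoking Theorem~\ref{deltasqzero} (that $\Delta_R^2(\T)=0$) to exclude non-zero idempotents from $\Delta_R(\T)$, together with the prior observation that $\varepsilon(z)\in\{0,1\}$ for any idempotent; for the converse it expands $(x_0+\delta)^2$ term by term using $x_0\delta=\delta^2=0$ and $\delta x_0=\delta$, which are all special cases of your identity. Your argument is more self-contained and bypasses the appeal to Theorem~\ref{deltasqzero}; the paper's version has the minor advantage of reusing a structural result already on hand.
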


\begin{proof}
Since $\T$ is trivial, by Theorem \ref{deltasqzero}, $\Delta_R^2 (\T) = 0$. It follows that non-zero idempotents do not lie in $\Delta_R (\T)$. Hence, a non-zero idempotent has the form
 $z = x_0 + \delta$, where $\delta \in \Delta_R (\T)$ and $x_0 \in \T$ some fixed element. Indeed, 
$$
z^2 = x_0^2 + x_0 \delta + \delta x_0 + \delta^2= x_0+ \delta=z
$$
since $x_0^2 = x_0$, $x_0 \delta = \delta^2 = 0$ and $\delta x_0 = \delta$.
\end{proof}
\medskip

Observe that, if a quandle $Q = Q_1 \sqcup Q_2$ is a disjoint union of two subquandles, then
\begin{equation}\label{idempotents-union}
I(R[Q]) \supseteq  I(R[Q_1]) \cup I(R[Q_2]) .
\end{equation}
The inclusion is, in general, not an equality, as we see from the following result.

\begin{proposition}\label{joyce-quandle}
Let $\Cs(4)$ be the 3-element singular cyclic quandle given by
$$
\Cs(4) = \big\langle x, y, z~|~ x^2 = x,~y^2 = y,~z^2 = z,~x y = x,~x z = y,~y x = y,~y z = x,~z x = z,~z y = z \big\rangle.
$$
Then $I(\mathbb{Z}[\Cs(4)]) = \big\{ (1 - \beta) x +  \beta y,~ \alpha x + \alpha y +  (1 - 2 \alpha) z~|~\beta, \alpha \in \mathbb{Z} \big\}$.
\end{proposition}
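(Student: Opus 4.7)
The plan is to proceed by direct computation, exploiting the fact that $\Cs(4)$ has only three elements. I would write an arbitrary element as $u = \alpha x + \beta y + \gamma z$ with $\alpha,\beta,\gamma \in \mathbb{Z}$ and expand $u^2$ using the nine given products. Collecting coefficients of $x$, $y$, $z$ (noting that the products landing on $x$ are $xx,xy,yz$; on $y$ are $xz,yx,yy$; on $z$ are $zx,zy,zz$), the equation $u^2=u$ becomes the polynomial system
\begin{align*}
\alpha^2 + \alpha\beta + \beta\gamma &= \alpha,\\
\alpha\beta + \alpha\gamma + \beta^2 &= \beta,\\
\alpha\gamma + \beta\gamma + \gamma^2 &= \gamma.
\end{align*}
The third equation factors neatly as $\gamma(\alpha+\beta+\gamma-1)=0$, which already suggests the case split that drives the whole argument.

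Before attacking the system head-on, I would apply the augmentation map $\varepsilon \colon \mathbb{Z}[\Cs(4)] \to \mathbb{Z}$: because $\varepsilon(u)$ must be an idempotent of $\mathbb{Z}$, we get $\alpha+\beta+\gamma \in \{0,1\}$. This reduces the problem to two linear-algebraic subcases rather than a general cubic system.

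In the case $\alpha+\beta+\gamma=0$, equation three immediately gives $\gamma=0$; then $\beta=-\alpha$ reduces equation one to $\alpha(\alpha+\beta)=\alpha$, forcing $\alpha=0$ and hence $u=0$, which is excluded. In the case $\alpha+\beta+\gamma=1$, the third equation is automatic, and after substituting $\alpha+\beta = 1-\gamma$ both remaining equations collapse to $\gamma(\beta-\alpha)=0$. Since $\mathbb{Z}$ is a domain, either $\gamma=0$ or $\alpha=\beta$. Combined with $\alpha+\beta+\gamma=1$, the first alternative yields $u = (1-\beta)x + \beta y$ for arbitrary $\beta \in \mathbb{Z}$, and the second yields $u = \alpha x + \alpha y + (1-2\alpha)z$ for arbitrary $\alpha \in \mathbb{Z}$, matching the two families in the statement exactly.

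I do not foresee a genuine obstacle here; the proof is really a finite bookkeeping exercise. The only non-mechanical ingredient is invoking the augmentation map first, which replaces a cubic system over $\mathbb{Z}$ by two much cleaner linear constraints and makes the factorisation $\gamma(\beta-\alpha)=0$ transparent.
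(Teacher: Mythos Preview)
Your proof is correct and follows essentially the same direct-computation approach as the paper: both set up the identical three-equation system and solve it via a case split driven by the factorisation of the third equation. The only cosmetic difference is that you invoke the augmentation map up front to split on $\varepsilon(u)\in\{0,1\}$, whereas the paper splits on $\gamma=0$ versus $\gamma\neq 0$; the resulting casework is equivalent, and your use of the augmentation makes the collapse to $\gamma(\beta-\alpha)=0$ slightly more transparent.
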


\begin{proof}
The quandle $\Cs(4)$ is a disjoint union of two trivial subquandles, i.e. $\Cs(4)= \{ x, y \} \sqcup \{z \}$. If $w = \alpha x + \beta  y + \gamma z \in \mathbb{Z}[\Cs(4)]$, then
$$
w^2 = (\alpha^2 + \alpha \beta + \beta \gamma) x + (\alpha \beta + \beta^2 + \alpha \gamma) y + (\alpha \gamma + \beta \gamma + \gamma^2 )z.
$$
Thus, $w$ is an idempotent if and only if the system of equations
\begin{eqnarray*}
\alpha &=& \alpha^2 + \alpha \beta + \beta \gamma,\\
\beta &=& \beta^2 + \alpha \beta +  \alpha \gamma,\\
\gamma &=& \gamma^2 + \alpha \gamma + \beta \gamma,
\end{eqnarray*}
is simultaneously  solvable over integers. Suppose that $\gamma = 0$. Then we have the equations
$$\alpha = \alpha^2 + \alpha \beta,~\beta = \beta^2 + \alpha \beta.$$
If $\alpha = 0$, then $\beta = 0$ or $\beta = 1$. In the first case $w=0$, and in the second case $w = y$. If $\alpha \not= 0$, then $\alpha = 1 - \beta$ and we have idempotents
$$
w = (1 - \beta) x +  \beta y,~~\beta \in \mathbb{Z}.
$$
These are idempotents of the quandle ring $\mathbb{Z}[\{ x, y \}]$.
\par

Suppose further that $\gamma \not= 0$. Then the third equation of the system gives $\gamma = 1 - \alpha - \beta$. Substituting this expression in the first and the second equations gives
$$
(\alpha - \beta) (\alpha + \beta) = (\alpha - \beta).
$$
If $\alpha - \beta \not= 0$, then we have the same idempotent as in the previous case. If $\alpha - \beta = 0$, then we have idempotents
$$
w = \alpha x + \alpha y +  (1 - 2 \alpha) z,~~\alpha \in \mathbb{Z}.
$$
Thus, we have
$$
I(\mathbb{Z}[\Cs(4)]) = \big\{ (1 - \beta) x +  \beta y, \alpha x + \alpha y +  (1 - 2 \alpha) z~|~\beta, \alpha \in \mathbb{Z} \big\}.
$$
\end{proof}

The preceding example shows that
$$
I(\mathbb{Z}[\Cs(4)]) \not= I(\mathbb{Z}[\{ x, y \}]) \cup I(\mathbb{Z}[\{ z \}]),
$$
and hence the inclusion in  \eqref{idempotents-union} is, in general, strict.
 \medskip

Let  $\R_n = \{ a_0, a_1, \ldots, a_{n-1} \}$ be the dihedral quandle of order $n$, where $a_i*a_j=a_{2j-i ~(mod~n)}$. We examine idempotents in $\mathbb{Z}[\R_n]$ for $1 \le n\le 4$. Note that $\R_1$ and $\R_2$ are trivial quandles.

\begin{proposition}\label{idempotents-r3}
$ I(\mathbb{Z}[\R_3]) = \{ a_0, a_1, a_2 \}. $
\end{proposition}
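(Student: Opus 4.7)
The plan is to solve $w^2 = w$ directly. Writing $w = \alpha_0 a_0 + \alpha_1 a_1 + \alpha_2 a_2$, and using the multiplication rule $a_i * a_j = a_{2j - i \pmod 3}$ (which makes $\R_3$ commutative), a short tally shows that the coefficient of $a_i$ in $w^2$ equals $\alpha_i^2 + 2\alpha_j \alpha_k$, where $\{i,j,k\} = \{0,1,2\}$. Hence $w$ is an idempotent if and only if the symmetric system
$$
\alpha_i = \alpha_i^2 + 2\alpha_j \alpha_k, \qquad \{i,j,k\} = \{0,1,2\},
$$
is satisfied over $\mathbb{Z}$.

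Set $s := \alpha_0 + \alpha_1 + \alpha_2$. Applying the augmentation $\varepsilon$, which sends idempotents of $\mathbb{Z}[\R_3]$ to idempotents of $\mathbb{Z}$, forces $s \in \{0, 1\}$ (this is also visible from summing the three equations, which yields $s = s^2$). Now subtract the equation for $\alpha_j$ from the equation for $\alpha_i$ and factor:
$$
(\alpha_i - \alpha_j)\bigl(1 - \alpha_i - \alpha_j + 2\alpha_k\bigr) = 0.
$$
Substituting $\alpha_i + \alpha_j = s - \alpha_k$ rewrites the second factor as $1 - s + 3\alpha_k$, so for every permutation $(i, j, k)$ of $(0,1,2)$,
$$
(\alpha_i - \alpha_j)(1 - s + 3\alpha_k) = 0.
$$

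If $s = 0$, then $1 + 3\alpha_k$ is never zero in $\mathbb{Z}$, so $\alpha_i = \alpha_j$ for all pairs; combined with $s = 0$ this gives $w = 0$. If $s = 1$, the factored equation becomes $3\alpha_k(\alpha_i - \alpha_j) = 0$, so for each $k$ either $\alpha_k = 0$ or the remaining two coefficients agree. If every $\alpha_k$ were nonzero, then all three coefficients would coincide, but $3\alpha_0 = 1$ has no integer solution. Hence some $\alpha_k = 0$; applying the dichotomy to a second index then forces another coefficient to vanish, and $s = 1$ pins the last coordinate to $1$. This yields exactly $w \in \{a_0, a_1, a_2\}$.

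The only substantive step is teasing out the factorization above; after that, the argument is pure integer arithmetic. The key use of $\mathbb{Z}$ (as opposed to a ring containing $1/3$) is the observation that $1 + 3\mathbb{Z}$ avoids $0$, which is precisely what rules out nontrivial idempotents lying entirely in the augmentation ideal $\Delta(\R_3)$.
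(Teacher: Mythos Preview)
Your proof is correct and follows essentially the same route as the paper's: compute $w^2$ explicitly, split according to $\varepsilon(w)\in\{0,1\}$, subtract pairs of equations, and factor. The paper carries this out in the basis $e_i=a_i-a_0$ of $\Delta(\R_3)$, breaking the $S_3$ symmetry at the outset and treating the two augmentation cases with separate computations; your symmetric factorization $(\alpha_i-\alpha_j)(1-s+3\alpha_k)=0$ handles both cases at once and makes the obstruction coming from $3\nmid 1$ in $\mathbb{Z}$ more visible, but the underlying argument is the same.
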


\begin{proof}
Let $z = \alpha_0 a_0 + \alpha_1 a_1 + \alpha_2 a_2 \in \mathbb{Z}[\R_3]$ be an idempotent. Then $\varepsilon(z) = 0$ or $\varepsilon(z) = 1$.
\medskip

 Case 1. $\varepsilon(z) = 0$, i.e. $\alpha_0 = - \alpha_1 - \alpha_2$. Then $z = \alpha_1 e_1 + \alpha_2 e_2$, where $e_i = a_i - a_0$. The elements $e_1$ and $e_2$ generate $\Delta (\R_3)$ and have the following multiplication table.
\begin{center}
\begin{tabular}{|c||c|c|}
  \hline
 $\cdot$ & $e_1$ & $e_2$  \\
   \hline
     \hline
$e_1$ & $e_1 - 2 e_2$ & $-e_1 - e_2$  \\
  \hline
$e_2$  & $-e_1 - e_2$ & $-2 e_1 + e_2$ \\
  \hline
\end{tabular}\\
\end{center}
\par
Thus,
$$
z^2 = (\alpha_1^2 - 2 \alpha_1 \alpha_2 - 2 \alpha_2^2) e_1 + (\alpha_2^2 - 2 \alpha_1 \alpha_2 - 2 \alpha_1^2) e_2.
$$
The equality $z^2 = z$ leads to the equations
$$\alpha_1 = \alpha_1^2 - 2 \alpha_1 \alpha_2 - 2 \alpha_2^2,~~\alpha_2 = \alpha_2^2 - 2 \alpha_1 \alpha_2 - 2 \alpha_1^2.$$
Subtracting the second equation from the first yields
$$
\alpha_1 - \alpha_2 = 3 (\alpha_1 - \alpha_2) (\alpha_1 + \alpha_2).
$$
It is not difficult to see that in this case the system of equations has only zero solution $\alpha_i=0$ for  $i=0,1,2$.
\medskip

Case 2. $\varepsilon(z) = 1$. In this case
 $\alpha_0 = 1 - \alpha_1 - \alpha_2$. Then $z = a_0 + \alpha_1 e_1 + \alpha_2 e_2$ and we get
$$
z^2 = (2 \alpha_2 + \alpha_1^2 - 2 \alpha_1 \alpha_2 - 2 \alpha_2^2) e_1 + (2 \alpha_1 + \alpha_2^2 - 2 \alpha_1 \alpha_2 - 2 \alpha_1^2) e_2.
$$
 From $z^2 = z$, we obtain the equations
$$\alpha_1 - 2 \alpha_2 = \alpha_1^2 - 2 \alpha_1 \alpha_2 - 2 \alpha_2^2,~~\alpha_2 - 2 \alpha_1 = \alpha_2^2 - 2 \alpha_1 \alpha_2 - 2 \alpha_1^2.$$
Subtracting the second from the first gives
$$
\alpha_1 - \alpha_2 =  (\alpha_1 - \alpha_2) (\alpha_1 + \alpha_2).
$$
If $\alpha_1 = \alpha_2$, then the system is equivalent to the equation $\alpha_1 = 3 \alpha_1^2$, which has only zero solution $(\alpha_1, \alpha_2) = (0,0)$. Thus, $\alpha_0=1$, and hence $z=a_0$ in this case. If $\alpha_1 \not = \alpha_2$, then the system has solutions $(\alpha_1, \alpha_2) = (1,0)$ or $(0,1)$. In this case $\alpha_0 = 0$, and hence $z=a_1$ or $a_2$.
\end{proof}

\medskip

Since $\R_4$ is disconnected and is disjoint union of its two trivial subquandles $\{a_0, a_2 \} $ and $\{a_1, a_3 \}$, we obtain
$$
I(\mathbb{Z}[\R_4]) \supseteq  I(\mathbb{Z}[\{a_0, a_2 \}]) \cup I(\mathbb{Z}[\{a_1, a_3 \}]).
$$
In fact, we have equality in this case.

\begin{proposition}\label{R4-idempotents}
$
I(\mathbb{Z}[\R_4]) = \Big\{ t\big(a_0 + \alpha (a_2 - a_0) \big) + (1-t) \big(a_1 + \beta (a_3 - a_1) \big)~|~t \in \{ 0,1\}, ~\alpha, \beta \in \mathbb{Z} \Big\}.
$
\end{proposition}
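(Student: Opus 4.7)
The plan is to establish both inclusions, with the easy direction coming essentially for free from the earlier trivial-quandle result. Since $a_i * a_j = a_{2j-i \pmod{4}}$, the subsets $\{a_0, a_2\}$ and $\{a_1, a_3\}$ are each closed under the quandle operation, and a direct check shows that each is a \emph{trivial} subquandle. Applying Proposition \ref{trivial-quandle-idempotents} to both yields precisely the elements in the claimed set, giving the inclusion $\supseteq$.

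For the reverse inclusion, I would take an arbitrary non-zero idempotent $w = \sum_{i=0}^{3} \alpha_i a_i$ and expand $w^2$ coefficient by coefficient, obtaining four quadratic equations in $\alpha_0, \ldots, \alpha_3$. Rather than analyse the system brute-force, I would exploit the $\mathbb{Z}_2$-symmetries of the multiplication table that swap $a_0 \leftrightarrow a_2$ and $a_1 \leftrightarrow a_3$. Setting $s = \alpha_0 + \alpha_2$ and $t = \alpha_1 + \alpha_3$, so $\varepsilon(w) = s + t \in \{0,1\}$, summing the $[a_0]$- and $[a_2]$-equations should collapse to $s = s(s+t)$, while subtracting them should give $(\alpha_0 - \alpha_2)(1 - s + t) = 0$; the analogous identities $t = t(s+t)$ and $(\alpha_1 - \alpha_3)(1 - t + s) = 0$ come from the odd-indexed pair.

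The case $\varepsilon(w) = 0$ forces $s = t = 0$, so $\alpha_2 = -\alpha_0$ and $\alpha_3 = -\alpha_1$; substituting back into the $[a_0]$- and $[a_1]$-equations collapses everything to $\alpha_0 = \alpha_1 = 0$, contradicting $w \neq 0$. When $\varepsilon(w) = 1$, substituting $s + t = 1$ transforms the subtraction identities into $2t(\alpha_0 - \alpha_2) = 0$ and $2s(\alpha_1 - \alpha_3) = 0$; since $\mathbb{Z}$ is an integral domain, this forces $t = 0$ or $\alpha_0 = \alpha_2$, and $s = 0$ or $\alpha_1 = \alpha_3$. The diagonal possibility $\alpha_0 = \alpha_2$ together with $\alpha_1 = \alpha_3$ is excluded by a parity obstruction: it would make $s + t = 2(\alpha_0 + \alpha_1)$ even, contradicting $\varepsilon(w) = 1$. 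Hence $s = 0$ or $t = 0$, which forces $w$ to lie entirely inside $\mathbb{Z}[\{a_1, a_3\}]$ or $\mathbb{Z}[\{a_0, a_2\}]$; Proposition \ref{trivial-quandle-idempotents} then produces exactly the parameterisation in the statement.

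The main obstacle is bookkeeping rather than conceptual: one has to write out the four $w^2 = w$ equations correctly from the slightly asymmetric multiplication table (for instance $a_0 * a_1 = a_2$ but $a_0 * a_2 = a_0$), and spot the symmetric factorisations $s(s+t)$ and $(\alpha_0 - \alpha_2)(s - t)$. Once those are in hand, integrality of $\mathbb{Z}$ together with the parity obstruction $s+t=1$ odd versus $2(\alpha_0 + \alpha_1)$ even closes out the argument cleanly.
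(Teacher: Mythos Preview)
Your approach is correct and runs somewhat differently from the paper's. Both arguments split on $\varepsilon(w)\in\{0,1\}$ and reduce to a quadratic system, but the paper rewrites $z$ in the basis $e_i=a_i-a_0$ of $\Delta(\R_4)$, computes the multiplication table of the $e_i$, and then does a direct (and slightly asymmetric) case analysis on the resulting three-variable system; for instance, in the $\varepsilon=1$ case it arrives at the single factorisation $(\alpha-\gamma)=(\alpha-\gamma)(\alpha+\gamma)$ and branches on $\alpha=\gamma$ versus $\alpha\neq\gamma$. Your symmetric variables $s=\alpha_0+\alpha_2$, $t=\alpha_1+\alpha_3$ and the factored identities $s=s(s+t)$, $(\alpha_0-\alpha_2)(1-s+t)=0$ (with their odd-indexed partners) exploit the $a_0\leftrightarrow a_2$, $a_1\leftrightarrow a_3$ symmetry more fully, which makes the dichotomy cleaner and the parity obstruction visible; the paper's factorisation is recovered as a special case of yours.

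One step is glossed over, though: the implication ``$t=0$ forces $w\in\mathbb{Z}[\{a_0,a_2\}]$'' is not immediate, since $t=0$ alone only says $\alpha_3=-\alpha_1$. You must feed $t=0$ (hence $s=1$) back into the \emph{other} subtraction identity $2s(\alpha_1-\alpha_3)=0$ to obtain $\alpha_1=\alpha_3$, and then $\alpha_1+\alpha_3=t=0$ gives $\alpha_1=\alpha_3=0$; the case $s=0$ is symmetric. This is one line, but as written the passage from ``$s=0$ or $t=0$'' to ``$w$ lies in one of the two subrings'' is unjustified, so make sure to spell it out when you write the proof in full.
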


\begin{proof}
If $z = \alpha_0 a_0 + \alpha_1 a_1 + \alpha_2 a_2 + \alpha_3 a_3 \in \mathbb{Z}[\R_4]$, then a straightforward calculation gives
$$
z^2 = (\alpha_0^2 +  \alpha_0 \alpha_2 +  \alpha_1 \alpha_2 +  \alpha_2 \alpha_3) a_0 + (\alpha_1^2 + \alpha_0 \alpha_3 +  \alpha_1 \alpha_3 +  \alpha_2 \alpha_3) a_1 +
$$
$$
+(\alpha_2^2 + \alpha_0 \alpha_1 +  \alpha_0 \alpha_2 +  \alpha_0 \alpha_3) a_2 + (\alpha_3^2 +  \alpha_0 \alpha_1 +  \alpha_1 \alpha_2 +  \alpha_1 \alpha_3) a_3.
$$
The equality $z^2 = z$ holds if and only if the following system of equations
\begin{eqnarray*}
\alpha_0 &=& \alpha_0^2 +  \alpha_0 \alpha_2 +  \alpha_1 \alpha_2 +  \alpha_2 \alpha_3,\\
\alpha_1 &=& \alpha_1^2 +  \alpha_0 \alpha_3 +  \alpha_1 \alpha_3 +  \alpha_2 \alpha_3,\\
\alpha_2 &=& \alpha_2^2 +  \alpha_0 \alpha_1 +  \alpha_0 \alpha_2 +  \alpha_0 \alpha_3,\\
\alpha_3 &=& \alpha_3^2 +  \alpha_0 \alpha_1 +  \alpha_1 \alpha_2 +  \alpha_1 \alpha_3,
\end{eqnarray*}
has integral solutions. We use the observation from the beginning of this section and consider two cases:
\medskip

Case 1. $\varepsilon (z) = 0$, i.e. $z \in \Delta (\R_4)$. In this case
$$
z = \alpha e_1 + \beta e_2 + \gamma e_3 ~\mbox{for some}~ \alpha, \beta, \gamma \in \mathbb{Z},
$$
where $e_i = a_i - a_0$, $i = 1, 2, 3$. Using the multiplication table

\begin{center}
\begin{tabular}{|c||c|c|c|}
  \hline
 $\cdot$ & $e_1$ & $e_2$ & $e_3$ \\
   \hline
     \hline
$e_1$ & $e_1 - e_2 -e_3$ & $0$ & $e_1 - e_2 -e_3$ \\
  \hline
$e_2$  & $-2 e_2$ & $0$ & $-2 e_2$ \\
  \hline
$e_3$ & $-e_1 - e_2+e_3$ & $0$ & $-e_1 - e_2+e_3$ \\
  \hline
\end{tabular}\\
\end{center}
we obtain
$$
z^2 = (\alpha^2 - \gamma^2) e_1 - (\alpha^2 + 2 \alpha \beta + 2 \alpha \gamma + 2 \beta \gamma + \gamma^2)  e_2 + (-\alpha^2  + \gamma^2) e_3.
$$
The element $z$ is an idempotent if and only if
\begin{eqnarray*}
 \alpha &=& \alpha^2 - \gamma^2,\\
\beta &=& - (\alpha^2 + 2 \alpha \beta + 2 \alpha \gamma + 2 \beta \gamma + \gamma^2),\\
\gamma &=& -\alpha^2  + \gamma^2.
\end{eqnarray*}
Adding the first and third equations gives $\alpha + \gamma = 0$, i.e. $\gamma = -\alpha$. Then it follows from the system of equations that $\alpha = \beta = \gamma = 0$. Thus, $\Delta (\R_4)$ does not have non-zero idempotents.
\medskip

Case 2. $\varepsilon (z) = 1$, i.e. $z = a_0 + \delta$, where $\delta \in \Delta (\R_4)$ and
$$
\delta = \alpha e_1 + \beta e_2 + \gamma e_3 ~\mbox{for some}~ \alpha, \beta, \gamma \in \mathbb{Z}.
$$
We have $z^2 = a_0 + \delta a_0 + a_0 \delta + \delta^2$. Since $\Delta (\R_4)$ is a two-sided ideal, we have $\delta a_0, a_0 \delta \in \Delta (\R_4)$. Using the formulas
$$
e_1 a_0 = e_3,~~e_2 a_0 = e_2,~~e_3 a_0 = e_1,~~a_0 e_1 = e_2,~~ a_0 e_2 = 0,~~ a_0 e_3 = e_2,
$$
we obtain
$$
\delta a_0 = \alpha e_3 + \beta e_2 + \gamma e_1,~~a_0 \delta  = \alpha e_2 +  \gamma e_2.
$$
Using the expression for $\delta^2$ from Case 1 gives
$$
z^2 = a_0 + (\gamma + \alpha^2 - \gamma^2) e_1 + (\beta + \alpha + \gamma - \alpha^2 - \gamma^2 - 2 \alpha \gamma - 2 \alpha \beta - 2 \beta \gamma) e_2 + (\alpha - \alpha^2 + \gamma^2) e_3.
$$
Now $z$ is an idempotent if and only if the system of equations
\begin{eqnarray*}
\alpha &=& \gamma + \alpha^2 - \gamma^2,\\
0 &=&  \alpha + \gamma - \alpha^2 - \gamma^2 - 2 \alpha \beta - 2 \alpha \gamma - 2 \beta \gamma,\\
\gamma &=& \alpha - \alpha^2  + \gamma^2,
\end{eqnarray*}
has integral solutions. The first equation has the form
$$
(\alpha - \gamma) = (\alpha - \gamma) (\alpha + \gamma).
$$
Suppose that $\alpha = \gamma$, then the second equation has the form $0 = \alpha (1 - 2 \alpha - 2 \beta)$. If $\alpha = 0$, then for arbitrary $\beta$ we have the idempotent $z = a_0 + \beta (a_2 - a_0)$. If $\alpha \not= 0$, then the second equation does not have solutions.

Suppose that $\alpha \neq \gamma$, then $\gamma = 1 - \alpha$ and the second equation gives $\beta = 0$. Hence, for arbitrary $\alpha$ we have the idempotent $z = a_3 + \alpha(a_1 - a_3).$
\end{proof}

\begin{remark}
Note that $\T_3$, $\R_3$ and $\Cs(4)$ are, up to isomorphism,  all the quandles of order 3, which we have considered in this section. Since the number of quandles grow rapidly with order (see \cite[Table 1]{EFT}), for example, there are 7 quandles of order 4 and 22 quandles of order 5, computation of idempotents seems, in general, a challenging problem. Further, Proposition \ref{idempotents-r3} deals with a connected quandle whereas Propositions \ref{trivial-quandle-idempotents} and \ref{R4-idempotents} consider disconnected quandles. This makes us suspect that probably connected quandles have only trivial idempotents.
\end{remark}
\medskip

%%%%%%%%%%%%%%%%%%%%%%% %%%%%%%%%%%%%%%%%%%%%%%

\section{Maximal quandles in quandle rings}\label{sec-max-quandles}

Recall that $\mq(R[Q])$ denotes the set of maximal quandles in the quandle ring $R[Q]$. Obviously, $\{0\}$ is a trivial quandle in $R[Q]$, called the {\it zero quandle}. Further, if some quandle $X$ in $R[Q]$ contains $\{0\}$, then $X = \{ 0 \}$. We begin by determining maximal quandles in the quandle ring $R[\T]$.

\begin{proposition}
$\mq(R[\T])=\{x_0 + \Delta_R(\T) \}$, where $x_0 \in \T$ is a fixed element.
\end{proposition}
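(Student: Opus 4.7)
The plan is to combine two facts: first, that every element of a quandle in $R[\T]$ must be an idempotent, so a non-zero quandle sits inside $I(R[\T])$; and second, that the set $x_0 + \Delta_R(\T)$ is itself a quandle in $R[\T]$. Combined, these show that $x_0 + \Delta_R(\T)$ is the unique non-zero maximal quandle.

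First I would observe that any non-zero quandle $X \subseteq R[\T]$ consists of non-zero idempotents (since $x*x=x$ for $x\in X$), and moreover $0 \notin X$ unless $X=\{0\}$: if $0 \in X$ and $y \in X$, then axiom (Q2) would require some $z \in X$ with $z \cdot 0 = y$, but $z \cdot 0 = 0$ in the ring $R[\T]$, forcing $y=0$. By Proposition \ref{trivial-quandle-idempotents}, $I(R[\T]) = x_0 + \Delta_R(\T)$, so every non-zero quandle in $R[\T]$ is a subset of $x_0 + \Delta_R(\T)$.

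The main step is to check that $x_0 + \Delta_R(\T)$ is itself closed under the quandle multiplication, and in fact forms a trivial quandle. For $u = x_0 + \delta_1$ and $v = x_0 + \delta_2$ with $\delta_i \in \Delta_R(\T)$, I expand
\[
uv = x_0 \cdot x_0 + x_0 \delta_2 + \delta_1 x_0 + \delta_1 \delta_2.
\]
Here $x_0 \cdot x_0 = x_0$, and $\delta_1 \delta_2 = 0$ by Theorem \ref{deltasqzero}. For the cross terms, since $\T$ is trivial we have $x_0 y = x_0$ and $y x_0 = y$ for every $y \in \T$; linear extension on the basis $\{y - x_0 : y \in \T \setminus \{x_0\}\}$ of $\Delta_R(\T)$ yields $x_0 \delta_2 = 0$ and $\delta_1 x_0 = \delta_1$. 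Therefore $uv = x_0 + \delta_1 = u$, so $x_0 + \Delta_R(\T)$ is a trivial quandle (the quandle axioms (Q1)--(Q3) are immediate once $uv=u$ is known).

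Combining the two steps gives the conclusion. Any non-zero quandle in $R[\T]$ is contained in $x_0 + \Delta_R(\T)$, and the latter is a quandle; hence it is the unique non-zero maximal quandle, proving $\mq(R[\T]) = \{x_0 + \Delta_R(\T)\}$. There is no real obstacle here; the only thing to be careful about is verifying that $x_0 + \Delta_R(\T)$ does not depend on the choice of $x_0$ (it does not, since any two elements of $\T$ differ by an element of $\Delta_R(\T)$).
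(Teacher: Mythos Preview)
Your proof is correct and follows essentially the same approach as the paper: both use Proposition~\ref{trivial-quandle-idempotents} to bound any non-zero quandle inside $x_0+\Delta_R(\T)$, then verify via the identical expansion $uv = x_0 + x_0\delta_2 + \delta_1 x_0 + \delta_1\delta_2 = x_0 + \delta_1 = u$ that this set is itself a (trivial) quandle. Your write-up is slightly more detailed---you spell out why $0\notin X$ via (Q2) and remark on independence from $x_0$---but the argument is the same.
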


\begin{proof}
By Proposition \ref{trivial-quandle-idempotents}, $x_0 + \Delta_R(\T)$ is the complete set of idempotents of $R[\T]$. It remains to prove the quandle axioms (Q2) and (Q3). Taking $z = x_0 + \delta$ and $w = x_0 + \delta'$, where $\delta, \delta' \in \Delta(T_n)$, we see that
$$
z w = (x_0 + \delta) (x_0 + \delta') = x_0 + \delta x_0 + x_0 \delta' + \delta \delta' = x_0 + \delta = z.
$$
Thus, the two axioms hold, and $x_0 + \Delta_R(\T)$ is, in fact, a trivial quandle.
\end{proof}

As a direct consequence of Proposition \ref{idempotents-r3}, we obtain

\begin{proposition}\label{max-quandles-r3}
$\mq(\mathbb{Z}[\R_3])= \{\R_3\}$.
\end{proposition}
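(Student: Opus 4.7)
The plan is to read off the maximal quandles directly from the idempotent calculation in Proposition \ref{idempotents-r3}. The key observation is that, by definition, every element of a quandle sitting inside $\mathbb{Z}[\R_3]$ must be a non-zero idempotent of the ring, since $x\ast x = x$ translates to $x^2 = x$ in the quandle ring. Consequently any non-zero quandle $X \subseteq \mathbb{Z}[\R_3]$ satisfies $X \subseteq I(\mathbb{Z}[\R_3])$.

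First I would invoke Proposition \ref{idempotents-r3} to get $I(\mathbb{Z}[\R_3]) = \{a_0, a_1, a_2\} = \R_3$. Hence every non-zero quandle in $\mathbb{Z}[\R_3]$ is a subset of $\R_3$. Since $\R_3$ itself is a quandle (indeed the original one), it is trivially closed under the multiplication of $\mathbb{Z}[\R_3]$ and satisfies axioms (Q1)--(Q3). Therefore $\R_3 \in \mq(\mathbb{Z}[\R_3])$, and any other non-zero quandle must be a proper subset of $\R_3$, hence cannot be maximal unless it equals $\R_3$.

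The only thing left to check is that there is a unique maximal quandle, i.e. that no proper subquandle of $\R_3$ can be maximal. This is automatic, as any proper subset $X \subsetneq \R_3$ which is itself a quandle under the induced operation is strictly contained in the quandle $\R_3 \subseteq \mathbb{Z}[\R_3]$, so $X$ is not maximal. Thus $\mq(\mathbb{Z}[\R_3]) = \{\R_3\}$.

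I do not anticipate any real obstacle here: all the work is already done in the idempotent computation, and this proposition is essentially a one-line corollary of Proposition \ref{idempotents-r3} together with the tautology that every quandle element is an idempotent of the ambient quandle ring.
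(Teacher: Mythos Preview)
Your proposal is correct and follows exactly the paper's approach: the paper presents this proposition as ``a direct consequence of Proposition~\ref{idempotents-r3}'' with no further argument, and you have simply spelled out the obvious details of that deduction. There is nothing to add.
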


Regarding $\R_4$, which is disconnected, we prove the following.

\begin{theorem}
The quandle ring $\mathbb{Z}[\R_4]$ contains a unique maximal quandle, namely,
$$
M = \Big\{ t\big(a_0 + \alpha (a_2 - a_0) \big) + (1-t) \big(a_1 + \beta (a_3 - a_1) \big)~|~t \in \{ 0,1\}, ~\alpha, \beta \in \mathbb{Z} \Big\}.
$$
\end{theorem}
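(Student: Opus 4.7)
The plan is to exploit Proposition~\ref{R4-idempotents} directly. Since every element of a subquandle $X$ of $\mathbb{Z}[\R_4]$ must satisfy $x \ast x = x$, every element of $X$ is an idempotent of $\mathbb{Z}[\R_4]$. By Proposition~\ref{R4-idempotents}, the set of all non-zero idempotents of $\mathbb{Z}[\R_4]$ is exactly $M$. Hence any non-zero subquandle is forced to sit inside $M$. Therefore, once I verify that $M$ is itself a subquandle of $\mathbb{Z}[\R_4]$, it is automatic that $M$ is the unique maximal non-zero quandle.

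To verify closure and the quandle axioms on $M$, I would first record the relevant multiplication data in $\R_4$: the subsets $\{a_0,a_2\}$ and $\{a_1,a_3\}$ are trivial subquandles, while for even $i$ and odd $j$ the product $a_i \ast a_j$ always lies in $\{a_0,a_2\}$ and depends only on $i$ (and symmetrically for $a_j \ast a_i$). Writing the two families of non-zero idempotents as
\[
u_\alpha = (1-\alpha)a_0 + \alpha a_2, \qquad v_\beta = (1-\beta)a_1 + \beta a_3,
\]
a short bilinear expansion (using that the coefficients in $u_\alpha$ and $v_\beta$ each sum to $1$) yields the multiplication table
\[
u_\alpha \cdot u_{\alpha'} = u_\alpha, \qquad v_\beta \cdot v_{\beta'} = v_\beta, \qquad u_\alpha \cdot v_\beta = u_{1-\alpha}, \qquad v_\beta \cdot u_\alpha = v_{1-\beta}.
\]
In particular $M$ is closed under multiplication.

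From this table, axiom (Q1) holds because every element is an idempotent (alternatively, by Proposition~\ref{R4-idempotents}). For (Q2) I would check the four cases by hand: right-multiplication by any $u_{\alpha'}$ acts as the identity on the $u$-family and as $\beta \mapsto 1-\beta$ on the $v$-family; right-multiplication by any $v_{\beta'}$ acts as $\alpha \mapsto 1-\alpha$ on the $u$-family and as the identity on the $v$-family. Each of these is an involution of $M$, so $S_y$ is a bijection for every $y \in M$. Axiom (Q3) then reduces to checking eight cases $(x,y,z)$ parameterised by the types (I or II) of the three arguments; in each case both $(xy)z$ and $(xz)(yz)$ simplify to the same element via the table above.

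The only genuinely non-routine point is identifying the correct bijections in (Q2) and in writing down the multiplication table cleanly, since $M$ is infinite and so one cannot just tabulate finitely many products. However, because the coefficients in each $u_\alpha$ and $v_\beta$ sum to $1$, and because the right-multiplication operators on $\R_4$ restricted to even or odd indices are constant, the parameters $\alpha$ and $\beta$ collapse in all products except under the swap $\alpha \leftrightarrow 1-\alpha$ (resp. $\beta \leftrightarrow 1-\beta$) induced by crossing between the two families. Once this observation is made, all verifications are one-line computations, and the uniqueness of $M$ follows immediately from the inclusion $X \setminus \{0\} \subseteq I(\mathbb{Z}[\R_4]) = M$ applied to any non-zero subquandle $X$.
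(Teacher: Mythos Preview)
Your proposal is correct and follows essentially the same route as the paper: both arguments reduce maximality to Proposition~\ref{R4-idempotents}, split $M$ into the two one-parameter families $u_\alpha$ and $v_\beta$, compute the same four-entry multiplication table (in the paper's notation $uv=u_{1-\alpha}$, $vu=v_{1-\beta}$, and each family is trivial), and then verify the quandle axioms from that table. The only cosmetic difference is that the paper packages the (Q2)/(Q3) check by observing that each $S_u$ is the automorphism of $M$ induced by the transposition on the appropriate two-element subquandle $\{a_0,a_2\}$ or $\{a_1,a_3\}$, whereas you phrase it as ``right multiplication is an involution'' plus an eight-case check of (Q3); the content is identical.
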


\begin{proof}
By Proposition \ref{R4-idempotents}, $M$ is the complete set of idempotents in $\mathbb{Z}[\R_4]$. Therefore any quandle in $\mathbb{Z}[\R_4]$ must be contained in $M$. Thus, to prove the theorem, we need to show that $M$ is itself a quandle. We write  $M = M_1 \sqcup M_2$, where
\begin{eqnarray*}
M_1 &=& \big\{ a_0 + \alpha (a_2 - a_0)~|~\alpha \in \mathbb{Z} \big\},\\
M_2 &=& \big\{a_1 + \beta (a_3 - a_1)~|~ \beta \in \mathbb{Z} \big\}.
\end{eqnarray*}
A direct check shows that each $M_i$ is a trivial quandle. Further, for $u = a_0 + \alpha (a_2 - a_0) \in M_1$ and $v = a_1 + \beta (a_3 - a_1) \in M_2$, we have
$$uv= (a_0 + \alpha (a_2 - a_0))(a_1 + \beta (a_3 - a_1))=a_0 + (1 - \alpha)(a_2 - a_0) \in M_1$$
and
$$vu= (a_1 + \beta (a_3 - a_1))(a_0 + \alpha (a_2 - a_0))= a_1 + (1-\beta) (a_3 - a_1) \in M_2.$$
Thus, $M$ is closed under multiplication. The proof would be complete once we show that the map $S_u : M \to M$ given by $S_u (w) = w  u$ is an automorphism of $M$ for each $u \in M$. 
Suppose that $u \in M_1$. Then $S_u |_{M_1}$ is the identity automorphism of $M_1$. If $w = a_1 + \beta (a_3 - a_1) \in M_2$, then
$$
S_u (w) = a_1 + (1 - \beta) (a_3 - a_1).
$$
Thus, $S_u |_{M_2}$ is the automorphism of $M_2$ that is induced by the automorphism of the subquandle $\{ a_1, a_3 \}$ permuting the elements, and hence $S_u$ is an automorphism of $M$.
\par
Now suppose  that $u \in M_2$. A direct check shows that $S_u |_{M_2}$ is the identity automorphism. If $w = a_0 + \alpha (a_2 - a_0) \in M_1$, then
$$
S_u (w) =  a_0 + (1 - \alpha)(a_2 - a_0).
$$
Thus, $S_u |_{M_1}$ is the automorphism of $M_1$ induced by the automorphism of the quandle $\{ a_0, a_2 \}$ permuting the elements, and $S_u$ is an automorphism of $M$ in this case as well.
\end{proof}

\begin{theorem}
$\mq(\mathbb{Z}[\Cs(4)])= \{N_1, N_2 \}$, where
$$
N_1= \big\{ z, ~(1 - \beta) x +  \beta y~|~\beta \in \mathbb{Z} \big\},
$$
$$
N_2= \big\{\alpha x + \alpha y +  (1 - 2 \alpha) z~|~ \alpha \in \mathbb{Z} \big\}.
$$
\end{theorem}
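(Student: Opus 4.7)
The plan is to establish three things: (i) every subquandle of $\mathbb{Z}[\Cs(4)]$ consists of idempotents and hence lies in $N_1 \cup N_2$; (ii) $N_1$ and $N_2$ are themselves subquandles; and (iii) any subquandle of $\mathbb{Z}[\Cs(4)]$ is contained either in $N_1$ or in $N_2$. Claims (ii) and (iii) together force $N_1$ and $N_2$ to be the only maximal quandles. Part (i) is immediate from Proposition~\ref{joyce-quandle}; one also notes $N_1 \cap N_2 = \{z\}$, as equating a general element of one family with one of the other forces $\beta = 1/2$.

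For (ii), I would unpack the multiplication directly from the table for $\Cs(4)$. Writing $u_\beta = (1-\beta)x + \beta y$ and $v_\alpha = \alpha x + \alpha y + (1-2\alpha)z$, short calculations give
\[
v_\alpha v_{\alpha'} = v_\alpha, \qquad u_\beta u_{\beta'} = u_\beta, \qquad u_\beta z = u_{1-\beta}, \qquad z u_\beta = z, \qquad zz = z.
\]
Hence $N_2$ is a trivial quandle and $N_1$ is closed under multiplication. Axiom (Q1) is automatic, (Q2) holds in $N_1$ because each right multiplication is either the identity (by $u_\beta$) or the involution $u_\beta \leftrightarrow u_{1-\beta}$ fixing $z$ (by $z$), and (Q3) follows from a brief case analysis depending on how many of the three arguments equal $z$.

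The main step is (iii). Suppose for contradiction that some subquandle $Q$ meets both $N_1 \setminus \{z\}$ and $N_2 \setminus \{z\}$; pick $u_\beta, v_\alpha \in Q$ with $\alpha \ne 0$. A direct expansion using the multiplication table yields
\[
u_\beta\, v_\alpha = u_{f(\beta)}, \qquad f(t) = (4\alpha - 1)\,t + (1 - 2\alpha),
\]
together with $v_\alpha u_\beta = v_\alpha$ and $z v_\alpha = z$. Axiom (Q2) forces $S_{v_\alpha}$ to be a bijection of $Q$, so $u_\beta$ admits a preimage in $Q$; since $S_{v_\alpha}$ fixes $z$ and acts as the identity on $N_2$, this preimage must be some $u_{\beta_{-1}}$ with $\beta_{-1} \in \mathbb{Z}$ satisfying $f(\beta_{-1}) = \beta$. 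Iterating backwards produces integers $\beta = \beta_0, \beta_{-1}, \beta_{-2}, \dots$ with $f(\beta_{-k-1}) = \beta_{-k}$. Solving the recursion about the unique fixed point $t^\ast = 1/2$ of $f$ gives the closed form
\[
2\beta_{-k} - 1 \;=\; \frac{2\beta - 1}{(4\alpha - 1)^k};
\]
since $|4\alpha - 1| \geq 3$ and $2\beta - 1$ is a fixed nonzero integer, the right-hand side has absolute value strictly less than $1$ for all sufficiently large $k$ and is nonzero, so it cannot be an integer, contradicting $\beta_{-k} \in \mathbb{Z}$.

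The main obstacle lies in (iii): one must see that maximality fails not through closure under multiplication but through axiom (Q2), and then exploit the number-theoretic accident that the fixed point of $f$ is $1/2 \notin \mathbb{Z}$ together with $|4\alpha - 1| \geq 3$, which together force backwards iteration to leave $\mathbb{Z}$ in finitely many steps. With this key point in place, everything else is routine direct computation in $\mathbb{Z}[\Cs(4)]$.
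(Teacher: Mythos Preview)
Your proposal is correct and follows essentially the same route as the paper: use Proposition~\ref{joyce-quandle} to reduce to idempotents, verify by direct computation that $N_1$ and $N_2$ are quandles, and then rule out any larger quandle by analysing the right-multiplication map $S_{v_\alpha}$ via the affine function $f(t)=(4\alpha-1)t+(1-2\alpha)$ (the paper writes the same formula as $\beta'=1-\beta-2\alpha+4\alpha\beta$).

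The one genuine difference is in step~(iii). The paper argues that $S_{v_\alpha}$ is surjective on the full family $\{u_\gamma\}$ if and only if $\alpha=0$, i.e.\ it checks that $\beta=(\gamma+2\alpha-1)/(4\alpha-1)$ cannot be integral for every $\gamma$. Strictly speaking this only shows that the full idempotent set is not a quandle; it does not by itself exclude a proper subquandle containing some $u_\beta$ together with some $v_\alpha$, $\alpha\neq 0$. Your backwards-iteration argument (forcing $2\beta_{-k}-1=(2\beta-1)/(4\alpha-1)^k$ to be a nonzero integer of absolute value $<1$) closes exactly this gap: it shows that \emph{any} subquandle containing even a single $u_\beta$ and a single $v_\alpha$ with $\alpha\neq 0$ already violates (Q2). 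So your argument is a cleaner justification of the same key point.
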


\begin{proof}
Note that $N_1$ and $N_2$ are subsets of $I(\mathbb{Z}[\Cs(4)])$ which, by Proposition \ref{joyce-quandle},  is the complete set of idempotents in $\mathbb{Z}[\Cs(4)]$. A direct check shows that both $N_2$ and the set $\big\{(1 - \beta) x +  \beta y~|~\beta \in \mathbb{Z} \big\}$ are trivial quandles. Further, 
$$z((1 - \beta) x +  \beta y)=z$$
and 
$$((1 - \beta) x +  \beta y)z=\beta x +(1-\beta)y.$$
Thus, the map $S_z: N_1 \to N_1$ act by permuting the elements $x, y$ and fixing the element $z$, which shows that $N_1$ is also a quandle. It remains to show that $N_1$ and $N_2$ are maximal.
\par

For each $u \in I(\mathbb{Z}[\Cs(4)])$, consider the map $S_u : I(\mathbb{Z}[\Cs(4)]) \to I(\mathbb{Z}[\Cs(4)])$ given by $S_u(w)=wu$. If $u = (1 - \beta) x +  \beta y$, then a direct check shows that $S_u$ is the identity map on $I(\mathbb{Z}[\Cs(4)])$. And, if $u = \alpha x + \alpha y + (1 - 2 \alpha) z$, then $S_u |_{N_2}$ is the identity map. On the other hand, for elements of the form $w = (1 - \beta) x +  \beta y$, we have
$$
S_u (w) = ((1 - \beta) x +  \beta y ) (\alpha x + \alpha y + (1 - 2 \alpha) z )=(1 - \beta') x +  \beta' y,
$$
where $\beta'=1-\beta-2 \alpha+4 \alpha \beta$. Although it follows that $S_u(vw)=S_u(v)S_u(w)$ for all $v, w \in I(\mathbb{Z}[\Cs(4)])$, it turns out that $S_u$ is surjective if and only if $u=z$. In fact, given $(1 - \gamma) x +  \gamma y$, there exists $(1 - \beta) x +  \beta y$ such that $S_u((1 - \beta) x +  \beta y)=(1 - \gamma) x +  \gamma y$ if and only if $\beta= (\gamma+2 \alpha-1)/(4 \alpha-1)$. This equation admits an integral solution for each $\gamma$ if and only if $\alpha=0$. Thus, $N_1$ and $N_2$ are the only two maximal quandles in $\mathbb{Z}[\Cs(4)]$.

\end{proof}

\begin{theorem} \label{mod2-mq-R3}
$\mq(\mathbb{Z}_2[\R_3])= \Big\{\{ a_0 + a_1 + a_2 \},~ \R_3,~ \{ a_0 + a_1,  a_0 + a_2, a_1 + a_2 \} \Big\}$,\\ where  $\R_3 \cong \{ a_0 + a_1,  a_0 + a_2, a_1 + a_2 \}$.
\end{theorem}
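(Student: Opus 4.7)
The plan exploits the smallness of $\mathbb{Z}_2[\R_3]$, which has only $2^3 = 8$ elements. First I would enumerate the idempotents. Writing $z = \alpha_0 a_0 + \alpha_1 a_1 + \alpha_2 a_2$ with $\alpha_i \in \mathbb{Z}_2$ and using the fact that $\R_3$ is commutative, every cross-term in $z^2$ appears twice and hence with coefficient $2\alpha_i \alpha_j = 0$, so squaring collapses to $z^2 = \alpha_0^2 a_0 + \alpha_1^2 a_1 + \alpha_2^2 a_2 = z$, showing that every element of $\mathbb{Z}_2[\R_3]$ is an idempotent. The seven non-zero idempotents then partition naturally as $A \sqcup B \sqcup \{c\}$, where $A = \{a_0, a_1, a_2\} = \R_3$, the elements $b_i = a_j + a_k$ (with $\{i,j,k\} = \{0,1,2\}$) form $B = \{b_0, b_1, b_2\}$, and $c = a_0 + a_1 + a_2$.

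Next I would verify that each piece is itself a quandle. The subset $A$ is tautologically $\R_3$ and $\{c\}$ is a singleton, so both are quandles. For $B$, a short multiplication-table computation yields $b_i b_j = b_k$ whenever $a_i a_j = a_k$, which simultaneously shows that $B$ is a quandle and establishes the isomorphism $B \cong \R_3$ via $a_i \leftrightarrow b_i$ asserted in the statement.

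The heart of the proof is maximality: no non-zero quandle $X \subseteq \mathbb{Z}_2[\R_3]$ can contain elements from two distinct pieces of the partition. If $X \ni a_i$ and $X \ni c$, then $a_i c = c$ and $c^2 = c$, so $S_c|_X$ is not injective, violating (Q2). If $X \ni b_j$ and $X \ni c$, a direct computation gives $b_j c = 0$, forcing $0 \in X$, which by the opening remark of Section \ref{sec-max-quandles} collapses $X$ to $\{0\}$. The crucial case is when $X$ contains both some $a_i$ and some $b_j$: here I would tabulate the seven products $z b_j$ as $z$ ranges over the non-zero idempotents and observe that the image lies entirely in $B \cup \{0\}$, never hitting $A$. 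Consequently $a_i \in X$ admits no preimage under $S_{b_j}$ inside $X$, contradicting (Q2). Hence each non-zero quandle is contained in exactly one of $A$, $B$, or $\{c\}$, and these three sets are therefore the maximal ones.

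The main obstacle is purely bookkeeping rather than conceptual. The whole argument rests on one striking phenomenon to be verified by direct tabulation, namely that right multiplication by any element of $B$ always lands in $B \cup \{0\}$ and never in $A$. Once this is confirmed, $\mq(\mathbb{Z}_2[\R_3]) = \{\{c\}, \R_3, B\}$ follows immediately.
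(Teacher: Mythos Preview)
Your proof is correct and follows essentially the same route as the paper: both exploit the tiny size of $\mathbb{Z}_2[\R_3]$, verify that all eight elements are idempotent, and then use the right-multiplication maps $S_u$ to see which subsets can satisfy axiom (Q2). The paper tabulates all seven maps $S_{\varepsilon_0,\varepsilon_1,\varepsilon_2}$ in full and reads the answer off by inspection, whereas you organize the same computation via the partition $A\sqcup B\sqcup\{c\}$ and rule out each cross-piece case separately; your version is tidier but the underlying argument is the same brute-force check.
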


\begin{proof}
By Proposition \ref{max-quandles-r3}, $\mq(\mathbb{Z}[\R_3]) =\{ \R_3\}$. We use the mod 2 reduction homomorphism $\varphi_2 : \mathbb{Z}[\R_3] \to \mathbb{Z}_2[\R_3]$ to determine $\mq(\mathbb{Z}_2[\R_3])$. The quandle ring $\mathbb{Z}_2[\R_3]$ contains 8 elements and a direct check shows that all its elements are idempotents. Denote
$$
z_{\varepsilon_0,\varepsilon_1,\varepsilon_2} = \varepsilon_0 a_0 + \varepsilon_1 a_1 + \varepsilon_2 a_2,~~~\varepsilon_i \in \{ 0, 1 \},
$$
and set $S_{\varepsilon_0,\varepsilon_1,\varepsilon_2}: \mathbb{Z}_2[\R_3] \to \mathbb{Z}_2[\R_3]$ be the right multiplication by the element $z_{\varepsilon_0,\varepsilon_1,\varepsilon_2}$. Then the maps $S_{1,0,0}$, $S_{0,1,0}$ and $S_{0,0,1}$ are automorphisms of order 2 since they are automorphisms of $\R_3$. We now determine actions of the other maps.

The maps $S_{1,1,0}$ acts by the rules:
$$
S_{1,1,0} (a_0) = a_0 + a_2,~~S_{1,1,0} (a_1) = a_2 + a_1,~~S_{1,1,0} (a_2) = a_1 + a_0,~~
S_{1,1,0} (a_0+ a_1) = a_0 + a_1,
$$
$$
S_{1,1,0} (a_0+ a_2) = a_1 + a_2,~~S_{1,1,0} (a_1+ a_2) = a_0 + a_2,~~
S_{1,1,0} (a_0 + a_1 + a_2) = 0.
$$

The maps $S_{1,0,1}$ acts by the rules:
$$
S_{1,0,1} (a_0) = a_0 + a_1,~~S_{1,0,1} (a_1) = a_2 + a_0,~~S_{1,0,1} (a_2) = a_1 + a_2,~~
S_{1,0,1} (a_0+ a_1) = a_1 + a_2,
$$
$$
S_{1,0,1} (a_0+ a_2) = a_0 + a_2,~~S_{1,0,1} (a_1+ a_2) = a_0 + a_1,~~
S_{1,0,1} (a_0 + a_1 + a_2) = 0.
$$

The maps $S_{0,1,1}$ acts by the rules:
$$
S_{0,1,1} (a_0) = a_2 + a_1,~~S_{0,1,1} (a_1) = a_1 + a_0,~~S_{0,1,1} (a_2) = a_0 + a_2,~~
S_{0,1,1} (a_0+ a_1) = a_0 + a_2,
$$
$$
S_{0,1,1} (a_0+ a_2) = a_0 + a_1,~~S_{0,1,1} (a_1+ a_2) = a_1 + a_2,~~
S_{0,1,1} (a_0 + a_1 + a_2) = 0.
$$

The maps $S_{1,1,1}$ acts by the rules:
$$
S_{1,1,1} (a_0) = a_0 + a_2 + a_1,~~S_{1,1,1} (a_1) = a_2 + a_1 + a_0,~~S_{1,1,1} (a_2) = a_1 + a_0 + a_2,~~S_{1,1,1} (a_0 + a_1) =0,
$$
$$
S_{1,1,1} (a_0 + a_2) =0,~~S_{1,1,1} (a_1 + a_2) =0,~~S_{1,1,1} (a_0 + a_1 + a_2) =a_0 + a_1 + a_2.
$$
Looking at the images of these maps, we see that the only possible quandles in $\mathbb{Z}_2[\R_3]$ are $\{ a_0 + a_1 + a_2 \}$, $\R_3$, and $\{ a_0 + a_1,  a_0 + a_2, a_1 + a_2 \}$, where $\R_3$ is clearly isomorphic to $ \{ a_0 + a_1,  a_0 + a_2, a_1 + a_2 \}$.
\end{proof}

An immediate consequence of Theorem \ref{mod2-mq-R3} and Proposition \ref{max-quandles-r3} is the following.

\begin{corollary}
The map $\mq(\mathbb{Z}[\R_3]) \to \mq(\mathbb{Z}_2[\R_3])$ induced by the mod 2 reduction homomorphism $\mathbb{Z}[\R_3] \to \mathbb{Z}_2[\R_3]$ is not surjective.
\end{corollary}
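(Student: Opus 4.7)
The proof is essentially a cardinality argument, once the two preceding computations are in hand. I would proceed as follows.

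First I would observe that, by Proposition \ref{max-quandles-r3}, the domain $\mq(\mathbb{Z}[\R_3])$ is the singleton $\{\R_3\}$, while by Theorem \ref{mod2-mq-R3} the codomain $\mq(\mathbb{Z}_2[\R_3])$ consists of three distinct maximal quandles, namely $\{a_0+a_1+a_2\}$, $\R_3$, and $\{a_0+a_1,a_0+a_2,a_1+a_2\}$. Hence any map from a one-element set into a three-element set fails to be surjective, which gives the corollary immediately.

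For this to be a genuine proof, I should also confirm that the map in question is well-defined, i.e. that the image of the unique maximal quandle $\R_3 \subset \mathbb{Z}[\R_3]$ under the mod $2$ reduction $\varphi_2 \colon \mathbb{Z}[\R_3] \to \mathbb{Z}_2[\R_3]$ lands in $\mq(\mathbb{Z}_2[\R_3])$. Since $\varphi_2$ is a ring homomorphism which is the identity on the quandle generators $a_0,a_1,a_2$, we have $\varphi_2(\R_3)=\R_3 \subset \mathbb{Z}_2[\R_3]$, and this is precisely one of the three maximal quandles listed in Theorem \ref{mod2-mq-R3}. Thus the induced map is simply $\R_3 \mapsto \R_3$.

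There is no genuine obstacle here; the content of the corollary is entirely packaged into the two computational results it cites. The only minor subtlety worth a sentence in the write-up is to point out that the two ``extra'' maximal quandles $\{a_0+a_1+a_2\}$ and $\{a_0+a_1,a_0+a_2,a_1+a_2\}$ of $\mathbb{Z}_2[\R_3]$ cannot arise as $\varphi_2$-images of maximal quandles of $\mathbb{Z}[\R_3]$, simply because $\mq(\mathbb{Z}[\R_3])$ has no other elements to send to them. This makes the failure of surjectivity concrete: either of these two quandles serves as an explicit element of $\mq(\mathbb{Z}_2[\R_3])$ not in the image.
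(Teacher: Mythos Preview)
Your proposal is correct and matches the paper's approach exactly: the paper states the corollary as ``an immediate consequence of Theorem \ref{mod2-mq-R3} and Proposition \ref{max-quandles-r3}'' with no further proof, which is precisely your cardinality argument. Your added remark on well-definedness of the induced map is a reasonable clarification that the paper omits.
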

\medskip

%%%%%%%%%%%%%%%%%%%%%%%%%%%%%%%%%%%%%%%%%%%%%%

\section{Automorphisms of quandle algebras}\label{sec-auto}

For a quandle $Q$ denote by $\Aut (R[Q])$ the group of $R$-algebra automorphisms of $R[Q]$, that is, ring automorphisms of $R[Q]$ that are $R$-linear. It is evident that $\Aut (Q) \leq \Aut (R[Q])$. Further, if $Q$ is a finite quandle with $n$ elements, then $\Aut (R[Q]) \leq \mathrm{GL}_n(R)$.

Note that any $\phi \in \Aut (R[Q])$ is defined by its action on elements of $Q$. Suppose that $Q = \{x_1, x_2, \ldots, x_n \}$. Then each $\phi(x_i)$ is an idempotent of $R[Q]$ and the quandle $\phi(Q)$ is isomorphic to $Q$. Using these facts we determine the automorphism groups of quandle algebras of some quandles of small orders.

\medskip

Let $\T_n = \{ x_1, x_2, \ldots, x_n \}$ be the $n$-element trivial quandle. We know that $\Aut (\T_n)$ is isomorphic to the symmetric group $\Sigma_n$. Since the group $\Aut (\mathbb{Z}[\T_1])$ is trivial, we assume that $n > 1$.  If $\phi \in \Aut (\mathbb{Z}[\T_n])$, then $\phi(\T_n)$ is an $n$-element trivial quandle and $\phi$ is an isomorphism of the $\mathbb{Z}$-module $\mathbb{Z}[\T_n]$. Since each $\phi(x_i)$ is an idempotent, by Proposition \ref{trivial-quandle-idempotents}, we have
$$
\phi(x_1) = x_1 + \alpha_{11} (x_2-x_1) + \alpha_{21} (x_3-x_1) + \cdots + \alpha_{n-1, 1} (x_n-x_1),
$$
$$
\phi(x_2) = x_1 + \alpha_{12} (x_2-x_1) + \alpha_{22} (x_3-x_1) + \cdots + \alpha_{n-1, 2} (x_n-x_1),
$$
$$
\vdots 
$$
$$
\phi(x_n) = x_1 + \alpha_{1n} (x_2-x_1) + \alpha_{2n} (x_3-x_1) + \cdots + \alpha_{n-1, n} (x_n-x_1),
$$
and the main problem is to find such integers $\alpha_{ij}$ such that the matrix $[\phi]$ has determinant $\pm 1$.

For the case $n=2$ we have

\begin{theorem}
$\Aut (\mathbb{Z}[\T_2]) \cong \mathbb{Z} \rtimes \mathbb{Z}_2.$
\end{theorem}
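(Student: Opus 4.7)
The plan is to combine the description of idempotents from Proposition \ref{trivial-quandle-idempotents} with the fact that any $\mathbb{Z}$-algebra automorphism must send each quandle element to an idempotent. Taking $x_0 = x_1$ in that proposition, every nonzero idempotent of $\mathbb{Z}[\T_2]$ has the unique form $(1-\alpha)x_1 + \alpha x_2$ with $\alpha \in \mathbb{Z}$. Consequently, an arbitrary $\phi \in \Aut(\mathbb{Z}[\T_2])$ is determined by a pair $(\alpha,\beta) \in \mathbb{Z}^2$ via
\[
\phi(x_1) = (1-\alpha)x_1 + \alpha x_2, \qquad \phi(x_2) = (1-\beta)x_1 + \beta x_2.
\]
Since $x_i x_j = x_i$ in $\T_2$, a direct expansion shows that for any two idempotents $u,v$ of the above form one automatically has $uv = u$; so the only remaining condition is that $\phi$ be bijective as a $\mathbb{Z}$-linear map. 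The determinant of the associated $2\times 2$ matrix is $\beta - \alpha$, hence $\phi$ is an automorphism precisely when $\beta = \alpha \pm 1$.

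Having parametrised $\Aut(\mathbb{Z}[\T_2])$, the second step is to exhibit the semidirect product structure. I would let $\phi_\alpha$ denote the automorphism corresponding to $\beta = \alpha + 1$, and let $\sigma$ denote the swap $x_1 \leftrightarrow x_2$, which is exactly the case $(\alpha,\beta) = (1,0)$ and so belongs to the family $\beta = \alpha - 1$. A one-line composition check shows $\phi_\alpha \phi_\gamma = \phi_{\alpha+\gamma}$, so $\{\phi_\alpha \mid \alpha \in \mathbb{Z}\}$ is an infinite cyclic subgroup generated by $\phi_1$. Since $\sigma^2 = \id$ and every automorphism from the second family $\beta = \alpha - 1$ can be written as $\sigma \phi_\gamma$ for a unique $\gamma$, one obtains $\Aut(\mathbb{Z}[\T_2]) = \langle \phi_1 \rangle \langle \sigma \rangle$ with trivial intersection.

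To finish, I would verify the conjugation relation $\sigma \phi_\alpha \sigma = \phi_{-\alpha}$ by evaluating both sides on $x_1$ and $x_2$; this gives the nontrivial action of $\mathbb{Z}_2$ on $\mathbb{Z}$ by inversion, yielding the desired isomorphism $\mathbb{Z} \rtimes \mathbb{Z}_2$. There is no serious obstacle here: the argument reduces entirely to the idempotent count, the determinant condition, and a handful of $2\times 2$ manipulations. The only mild bookkeeping point is recognising that the two apparently separate families $\beta = \alpha \pm 1$ organise into a single coset decomposition modulo $\langle \phi_1 \rangle$, which is what makes the structure a semidirect product rather than something more exotic.
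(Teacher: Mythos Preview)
Your proof is correct and follows essentially the same route as the paper: parametrise automorphisms by the pair $(\alpha,\beta)$ coming from the idempotent description, impose the determinant condition $\beta-\alpha=\pm 1$, and then identify the infinite cyclic subgroup (your $\phi_\alpha$, the paper's $A_\alpha$) together with the order-two swap (your $\sigma$, the paper's $B_1$) satisfying $\sigma\phi_\alpha\sigma=\phi_{-\alpha}$. Your observation that $uv=u$ holds automatically for any two idempotents of $\mathbb{Z}[\T_2]$ is a slightly cleaner way of handling the multiplicativity check than the paper's direct verification, but otherwise the arguments coincide.
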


\begin{proof}
For any automorphism $\phi \in \Aut (\mathbb{Z}[\T_2])$, by the preceding discussion, we have
$$ \phi : \left\{ \begin{array}{ll} 
x_1 \longmapsto (1 - \alpha) x_1 + \alpha x_2, \\ 
x_2 \longmapsto (1 - \beta) x_1 + \beta x_2, 
\end{array} \right. 
$$
for some integers $\alpha$ and $\beta$. We first determine $\alpha$ and $\beta$ for which $\phi$ is an automorphism of the $\mathbb{Z}$-module $\mathbb{Z}[\T_2]$. For that to hold, if
$$
[\phi] =
\left(
  \begin{array}{cc}
1 - \alpha &1 - \beta \\
\alpha & \beta \\
  \end{array}
\right),
$$
then $\det([\phi]) = \beta - \alpha$ must be equal to $\pm 1$.

If $\det([\phi]) = 1$, then $\beta - \alpha = 1$ and
$$
A_{\alpha} :=[\phi] = 
\left(
  \begin{array}{cc}
1 - \alpha & -\alpha\\
 \alpha & 1 + \alpha \\
  \end{array}
\right).
$$

If $\det([\phi]) = -1$, then $\beta - \alpha = -1$ and
$$
B_{\alpha} :=[\phi] = 
\left(
  \begin{array}{cc}
1 - \alpha & 2 - \alpha \\
\alpha&  \alpha - 1 \\
  \end{array}
\right).
$$
A direct check shows that the automorphism $\phi$ corresponding to $A_{\alpha}, B_{\alpha}$ preserve the ring multiplication in $\mathbb{Z}[\T_2]$. Thus, we have
$$
\Aut(\mathbb{Z}[\T_2]) = \big\{ A_{\alpha}, B_{\alpha} ~| ~ \alpha \in \mathbb{Z} \big\}.
$$
It is easy to see that $A_0 = I$ is the identity matrix, and for arbitrary integers $\alpha, \beta$ the following formulas holds
$$
A_{\alpha} A_{\beta} = A_{\alpha + \beta},~~~B_{\alpha} B_{\beta} = A_{\alpha- \beta}.
$$
It follows from the first formula that $\{ A_{\alpha} ~|~ \alpha \in \mathbb{Z}\}$ is the infinite cyclic group with generator $A_1$. The second formula gives $B_{\beta} = A_{\beta -1}B_1 $, and hence $\mathrm{\Aut }(\mathbb{Z}[\T_2])$ is generated by $A_1$ and $B_1$. The matrix $B_1$ has order 2 and it is permutation of $x_1$ and $x_2$. Since $B_1 A_{\alpha} B_1 = A_{-\alpha}$, the subgroup $\langle A_1 \rangle$ is normal in $\mathrm{\Aut }(\mathbb{Z}[\T_2])$, and we have the desired result.
\end{proof}

\begin{theorem}
$\Aut (\mathbb{Z}[\Cs(4)]) \cong \mathbb{Z}_2$.
\end{theorem}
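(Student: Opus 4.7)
The plan is to bound $\Aut(\mathbb{Z}[\Cs(4)])$ using the previously computed list of maximal quandles. Any $\phi \in \Aut(\mathbb{Z}[\Cs(4)])$ sends idempotents to idempotents, is $\mathbb{Z}$-linear, and restricts to an injective quandle homomorphism on $\Cs(4)$. Hence $\phi(\Cs(4))$ is a $3$-element subquandle of $I(\mathbb{Z}[\Cs(4)])$ isomorphic to $\Cs(4)$, and so lies inside a maximal quandle, i.e., inside $N_1$ or $N_2$. Since $N_2 = \{\alpha x + \alpha y + (1-2\alpha)z \mid \alpha \in \mathbb{Z}\}$ is a trivial quandle whereas $\Cs(4)$ is not, we must have $\phi(\Cs(4)) \subseteq N_1$.

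Next I would make the quandle structure of $N_1 = \{z\} \cup \{a_\beta := (1-\beta)x + \beta y \mid \beta \in \mathbb{Z}\}$ explicit by a direct computation:
$$a_\alpha \cdot a_\beta = a_\alpha,\quad a_\alpha \cdot z = a_{1-\alpha},\quad z \cdot a_\beta = z,\quad z \cdot z = z.$$
Thus in $N_1$ every right-translation $S_{a_\beta}$ is the identity, while $S_z$ acts as the involution $a_\beta \leftrightarrow a_{1-\beta}$ and fixes $z$. In $\Cs(4)$, however, $S_z$ is an order-two permutation interchanging $x$ and $y$; so for $\phi$ to be a quandle embedding we are forced to set $\phi(z) = z$, because $z$ is the only element of $N_1$ whose right-translation is non-trivial. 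Consequently $\phi(x) = a_\beta$ for some $\beta \in \mathbb{Z}$, and $\phi(y) = \phi(xz) = a_\beta \cdot z = a_{1-\beta}$.

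The remaining, and essentially the only delicate, step is to impose $\mathbb{Z}$-linear invertibility. Writing $\phi$ in the basis $\{x,y,z\}$ gives the matrix
$$[\phi] = \begin{pmatrix} 1-\beta & \beta & 0 \\ \beta & 1-\beta & 0 \\ 0 & 0 & 1 \end{pmatrix},$$
with $\det[\phi] = (1-\beta)^2 - \beta^2 = 1 - 2\beta$. For $\phi$ to be an automorphism we need $\det[\phi] = \pm 1$, forcing $\beta \in \{0, 1\}$. The value $\beta = 0$ yields $\phi = \id$, while $\beta = 1$ yields the automorphism swapping $x$ and $y$ and fixing $z$, which is the non-trivial element of $\Aut(\Cs(4))$. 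Since both maps visibly extend to $\mathbb{Z}$-algebra automorphisms and are distinct involutions (one trivial, one of order $2$), we conclude $\Aut(\mathbb{Z}[\Cs(4)]) = \Aut(\Cs(4)) \cong \mathbb{Z}_2$.

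The main (mild) obstacle is verifying that there are no other $3$-element subquandles of $N_1$ isomorphic to $\Cs(4)$ that could also give a $\mathbb{Z}$-basis; this is handled by the observation above that $\phi(z)$ must be $z$ because of the $S$-action structure inside $N_1$, after which the problem reduces to a single determinant computation.
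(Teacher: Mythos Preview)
Your proof is correct, but it follows a genuinely different route from the paper's.

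The paper works directly from the idempotent list (Proposition~\ref{joyce-quandle}): each of $\phi(x),\phi(y),\phi(z)$ is an idempotent of one of the two types, and a case analysis using the relations $xz=y$ and $yz=x$ rules out all configurations except the one where $\phi(x),\phi(y)$ are of the first type and $\phi(z)$ of the second. A determinant computation then leaves four candidate matrices, and a final multiplication check eliminates the two with $\phi(z)=x+y-z$, leaving only the identity and the swap $x\leftrightarrow y$.

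You instead invoke the maximal-quandle classification (Theorem~5.4): $\phi(\Cs(4))$ is a three-element subquandle isomorphic to $\Cs(4)$, hence lies in some maximal quandle; since $N_2$ is trivial and $\Cs(4)$ is not, it must lie in $N_1$. The explicit right-translation structure of $N_1$ then pins down $\phi(z)=z$ immediately, after which one determinant finishes the argument. Your route is shorter and more structural---it never produces the spurious candidates $B_1,B_2$ that the paper has to eliminate by hand---but it depends on the Section~5 result, whereas the paper's argument uses only the idempotent computation and is self-contained within Section~6.
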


\begin{proof} 
Let $\phi \in \Aut (\mathbb{Z}[\Cs(4)])$. Since image of an idempotent under $\phi$ is an idempotent, by Proposition \ref{joyce-quandle}, we have
$$\{\phi(x), \phi(y), \phi(z)\} \subset \big\{ (1 - \beta) x +  \beta y~|~\beta \in \mathbb{Z} \big\} \cup \big\{\alpha x + \alpha y +  (1 - 2 \alpha) z~|~ \alpha \in \mathbb{Z} \big\}.$$ A direct check shows that the images of all the three generators cannot be of the same type,  else $\phi$ would not be a bijection. If
$$
 \phi : \left\{ \begin{array}{ll} 
x \longmapsto (1 - \alpha) x +  \alpha y, \\ 
y \longmapsto \beta x + \beta y +  (1 - 2 \beta) z,\\
z \longmapsto (1 - \gamma) x +  \gamma y,  
\end{array} \right. 
$$
then the relation $xz=y$ gives $1=2 \beta$, a contradiction. Similarly, if
$$
 \phi : \left\{ \begin{array}{ll} 
x \longmapsto \alpha x + \alpha y +  (1 - 2 \alpha) z, \\ 
y \longmapsto (1 - \beta) x +  \beta y,\\
z \longmapsto \gamma x + \gamma y +  (1 - 2 \gamma) z,  
\end{array} \right. 
$$
then the relation $yz=x$ gives $1=2 \alpha$, again a contradiction. Interchanging roles of $x$ and $y$, we see that $\phi(y)$ and $\phi(z)$ cannot be of the same type. Thus, only  $\phi(x)$ and $\phi(y)$ are idempotents of the same type. Arguments as above show that the only possibility is
$$
 \phi : \left\{ \begin{array}{ll} 
x \longmapsto (1 - \alpha) x +  \alpha y, \\ 
y \longmapsto (1 - \beta) x +  \beta y,\\
z \longmapsto \gamma  x + \gamma  y +  (1 - 2 \gamma ) z.
\end{array} \right. 
$$
Computing $\det([\phi])$ and equating to $\pm 1$ gives $(1-2 \gamma)(\beta-\alpha)= \pm 1$. This implies that $\gamma=0,1$ and $\beta=\alpha+\epsilon$, where $\epsilon= \pm1$.
\par
 If $\gamma=0$, then evaluating $\phi$ on the relation $xz=y$ gives $2 \alpha=1-\epsilon$. For $\epsilon=1$, we see that $[\phi]$ is the identity matrix. On the other hand, $\epsilon=-1$ gives
 $$
A:=[\phi]=
\left(
  \begin{array}{ccc}
0 & 1 & 0\\
1 & 0 & 0\\
0 & 0 & 1\\
  \end{array}
\right).
$$
In fact, $A$ is induced by the quandle automorphism $x \mapsto y$, $y \mapsto x$, $z \mapsto z$, which obviously preserve the ring multiplication.
\par

Similarly, if $\gamma=1$, then evaluating $\phi$ on the relation $xz=y$ gives $2 \alpha=1-\epsilon$. In this case, $\epsilon=1$ gives
$$
B_1:=[\phi]=
\left(
  \begin{array}{ccc}
1 & 0 & 1\\
0 & 1 & 1\\
0 & 0 & -1\\
  \end{array}
\right)
$$
and $\epsilon=-1$ gives
 $$
B_2:=[\phi]=
\left(
  \begin{array}{ccc}
0 & 1 & 1\\
1 & 0 & 1\\
0 & 0 & -1\\
  \end{array}
\right).
$$
An easy check shows that $B_i(x)B_i(z) \neq  B_i(y)$ for $i=1, 2$ although $xz=y$ in $\mathbb{Z}[\Cs(4)]$. Thus, only $A$ gives a desired automorphism, and  hence $\Aut (\mathbb{Z}[\Cs(4)]) \cong \mathbb{Z}_2$.
\end{proof}

%%%%%%%%%%%%%%%%%%%%%%%%%%%%%%%%%%%%%%%%%%%%%%%%

\begin{proposition}
$\Aut (\mathbb{Z}[\R_3]) \cong \Sigma_3 \cong \mathrm{\Aut }(\R_3)$.
\end{proposition}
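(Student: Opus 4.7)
The plan is to leverage the complete description of idempotents in $\mathbb{Z}[\R_3]$ obtained in Proposition \ref{idempotents-r3}. Since any $R$-algebra automorphism of $\mathbb{Z}[\R_3]$ must send idempotents to idempotents, and since by Proposition \ref{idempotents-r3} the only non-zero idempotents are exactly the quandle elements $a_0, a_1, a_2$, any $\phi \in \Aut(\mathbb{Z}[\R_3])$ must restrict to a set-theoretic map $\R_3 \to \R_3$.

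First I would show that this restriction is a bijection. Since $\{a_0, a_1, a_2\}$ is a $\mathbb{Z}$-basis of $\mathbb{Z}[\R_3]$ and $\phi$ is a $\mathbb{Z}$-linear isomorphism, the images $\phi(a_0), \phi(a_1), \phi(a_2)$ are $\mathbb{Z}$-linearly independent; since they all lie in the three-element set $\{a_0, a_1, a_2\}$, they must be distinct. Thus $\phi|_{\R_3}$ is a permutation of $\R_3$. Because $\phi$ preserves ring multiplication, $\phi|_{\R_3}$ preserves the quandle operation, and hence defines an element of $\Aut(\R_3)$.

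Next I would note that the natural map $\Aut(\R_3) \to \Aut(\mathbb{Z}[\R_3])$, sending a quandle automorphism to its $\mathbb{Z}$-linear extension, is injective (since any linear map is determined by its values on a basis). The construction above gives a two-sided inverse: the restriction map $\Aut(\mathbb{Z}[\R_3]) \to \Aut(\R_3)$. Composing in either order yields the identity, since a $\mathbb{Z}$-linear endomorphism of $\mathbb{Z}[\R_3]$ is determined by its values on the basis $\R_3$. Therefore $\Aut(\mathbb{Z}[\R_3]) \cong \Aut(\R_3)$, and the final isomorphism $\Aut(\R_3) \cong \Sigma_3$ is the standard fact that every permutation of $\{a_0, a_1, a_2\}$ preserves the dihedral quandle structure (as can be verified directly, or deduced from the connectedness and small size of $\R_3$).

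There is essentially no hard step here: the whole argument rests on Proposition \ref{idempotents-r3}, which provides the crucial \emph{rigidity} that the idempotent set coincides with $\R_3$ itself. If anywhere a subtlety arises, it is in being explicit that the bijection at the level of sets automatically respects the quandle multiplication because $\phi$ is a ring homomorphism, so no additional compatibility needs to be checked.
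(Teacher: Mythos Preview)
Your proposal is correct and follows essentially the same approach as the paper: both use Proposition \ref{idempotents-r3} to conclude that any $\phi \in \Aut(\mathbb{Z}[\R_3])$ permutes $\{a_0,a_1,a_2\}$, and then observe that every permutation of $\R_3$ extends to a ring automorphism. Your version is somewhat more detailed in justifying the bijectivity of the restriction and the inverse correspondence, but the underlying argument is identical.
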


\begin{proof}
If $\phi \in \Aut (\mathbb{Z}[\R_3])$, then, by Proposition \ref{idempotents-r3},
$$\{\phi(a_0), \phi(a_1), \phi(a_2) \}= \{a_0, a_1, a_2 \}.$$
Thus, $\phi$ is represented by a permutation matrix, which lies in $\Sigma_3$. Conversely, a direct check shows that any $R$-module automorphism $\phi$ represented by a permutation matrix satisfies $\phi(a_i a_j)=\phi(a_i) \phi(a_j)$, and hence $\phi \in\Aut (\mathbb{Z}[\R_3])$.
\end{proof}

\begin{theorem}
$\Aut (\mathbb{Z}[\R_4]) \cong ( \mathbb{Z}_2 \times \mathbb{Z}_2) \rtimes \mathbb{Z}_2$.
\end{theorem}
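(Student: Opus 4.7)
The plan is to identify $\Aut(\mathbb{Z}[\R_4])$ with $\Aut(\R_4)$ and then describe the latter as $(\mathbb{Z}_2 \times \mathbb{Z}_2) \rtimes \mathbb{Z}_2$. Let $\phi \in \Aut(\mathbb{Z}[\R_4])$. Since $\phi$ is a ring automorphism, each $\phi(a_i)$ is a non-zero idempotent, hence by Proposition \ref{R4-idempotents} lies in $M_1 \cup M_2$, where $M_1 = \{a_0 + \alpha(a_2-a_0) \mid \alpha \in \mathbb{Z}\}$ and $M_2 = \{a_1 + \beta(a_3-a_1) \mid \beta \in \mathbb{Z}\}$.

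The first reduction is to show that $\phi$ respects the orbit decomposition $\R_4 = \{a_0, a_2\} \sqcup \{a_1, a_3\}$ up to a global swap. Since these orbits are trivial subquandles, their $\phi$-images are trivial $2$-element subsets of $M$. But for $u = a_0 + \alpha(a_2 - a_0) \in M_1$ and $v \in M_2$ one has, from the preceding section, $uv = a_0 + (1-\alpha)(a_2-a_0) \neq u$, since $2\alpha = 1$ has no integer solution; hence no trivial pair can straddle $M_1$ and $M_2$. A rank argument ($M_1 \subset \mathbb{Z} a_0 \oplus \mathbb{Z} a_2$, of $\mathbb{Z}$-rank $2$) rules out all four images lying in the same $M_k$. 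Thus either (A) $\phi(\{a_0, a_2\}) \subset M_1$ and $\phi(\{a_1, a_3\}) \subset M_2$, or (B) the roles of $M_1$ and $M_2$ are reversed.

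In Case (A), write $\phi(a_0) = a_0 + \alpha_0(a_2-a_0)$, $\phi(a_2) = a_0 + \alpha_2(a_2-a_0)$, and analogous expressions with parameters $\beta_1, \beta_3$ on the odd side. Two constraints pin down these parameters: first, the $2 \times 2$ matrix representing $\phi$ on each orbit must have determinant $\pm 1$, forcing $\alpha_2 - \alpha_0 = \pm 1$ and $\beta_3 - \beta_1 = \pm 1$; second, the ring relation $a_0 a_1 = a_2$, combined with the crucial observation that $\phi(a_0)\phi(a_1)$ depends only on $\phi(a_0) \in M_1$ via the formula $uv = a_0 + (1-\alpha)(a_2-a_0)$, yields $\alpha_2 = 1 - \alpha_0$. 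Together these force $\alpha_0 \in \{0, 1\}$, so $\{\phi(a_0), \phi(a_2)\} = \{a_0, a_2\}$. The same treatment applied to $a_1 a_0 = a_3$ forces $\{\phi(a_1), \phi(a_3)\} = \{a_1, a_3\}$. Case (B) is handled symmetrically using the mirror formula $vu = a_1 + (1-\beta)(a_3-a_1)$, producing the four automorphisms that swap the two orbits. Altogether the eight possibilities are exactly the elements of $\Aut(\R_4)$; conversely, every element of $\Aut(\R_4)$ extends $\mathbb{Z}$-linearly to an element of $\Aut(\mathbb{Z}[\R_4])$, and so $\Aut(\mathbb{Z}[\R_4]) \cong \Aut(\R_4)$.

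Finally, I would identify this group of order $8$ with $(\mathbb{Z}_2 \times \mathbb{Z}_2) \rtimes \mathbb{Z}_2$. The subgroup $N = \{\mathrm{id},\, (a_0\,a_2),\, (a_1\,a_3),\, (a_0\,a_2)(a_1\,a_3)\}$ (namely $\Inn(\R_4)$) is isomorphic to $\mathbb{Z}_2 \times \mathbb{Z}_2$, and the involution $\tau = (a_0\,a_1)(a_2\,a_3)$ from Case (B) generates a complementary copy of $\mathbb{Z}_2$. Since conjugation by $\tau$ interchanges $(a_0\,a_2)$ and $(a_1\,a_3)$, the extension is non-split abelianly, i.e., the resulting semidirect product is the dihedral group of order $8$, as the statement claims. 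The main technical hurdle is the case analysis in step three; the payoff is that although $M$ is infinite, the simultaneous constraints imposed by the unimodular-determinant condition and a single multiplicative relation collapse the infinite parameter space down to the eight honest quandle automorphisms of $\R_4$.
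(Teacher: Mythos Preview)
Your proof is correct and follows essentially the same route as the paper: classify idempotents via Proposition~\ref{R4-idempotents}, show that $\phi$ either preserves or swaps the two orbits, and then use the unimodularity of $[\phi]$ together with the relation $a_0a_1=a_2$ (and its companion $a_1a_0=a_3$) to force all parameters into $\{0,1\}$, yielding exactly the eight permutation matrices coming from $\Aut(\R_4)$. The only cosmetic difference is in the orbit-preservation step: you argue that a trivial two-element subquandle cannot straddle $M_1$ and $M_2$, whereas the paper rules out $\phi(a_0),\phi(a_1)$ having the same type directly from $a_0a_1=a_2$; both arguments are short and equivalent in strength.
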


\begin{proof}
Note that the quandle  $\R_4 = \{ a_0, a_1, a_2, a_3 \}$ is a disjoint union of trivial subquandles $\{a_0,a_2 \}$ and $\{a_1,a_3 \}$. Since an algebra automorphism maps idempotents to idempotents, it follows from Proposition \ref{R4-idempotents} that any $\phi \in \Aut (\mathbb{Z}[\R_4]) $ is of the form
$\phi(a_i)= (1 - \alpha_i) a_0 + \alpha_i a_2$ or $(1 - \alpha_i) a_1 + \alpha_i a_3$ for each $i$. It is clear that no three or more $\phi(a_i)$ can be of the same form else $\phi$ would not be a bijection. Thus, exactly two $\phi(a_i)$ are of one form and the remaining two of the other form. If $\phi(a_0)$ and $\phi(a_1)$ are of the same form, then evaluating $\phi$ on $a_0a_1=a_2$ gives a contradiction. Similar arguments show that $\phi(a_0)$ and $\phi(a_3)$ cannot be of the same form. Thus, we must have
$$
 \phi : \left\{ \begin{array}{ll} 
a_0 \longmapsto (1 - \alpha) a_0 + \alpha a_2, \\ 
a_1 \longmapsto (1 - \beta) a_1 + \beta a_3,\\
a_2 \longmapsto (1 - \gamma) a_0 + \gamma a_2,\\
a_3 \longmapsto (1 - \delta) a_1 + \delta a_3,
\end{array} \right. 
$$
or
$$
 \phi : \left\{ \begin{array}{ll} 
a_0 \longmapsto (1 - \alpha) a_1 + \alpha a_3, \\ 
a_1 \longmapsto (1 - \beta) a_0 + \beta a_2,\\
a_2 \longmapsto (1 - \gamma) a_1 + \gamma a_3,\\
a_3 \longmapsto (1 - \delta) a_0 + \delta a_2,
\end{array} \right. 
$$
for some integers $\alpha, \beta, \gamma, \delta$. One can check that the second automorphism can be obtained by composing the first with $\tau$, where $\tau$ is the algebra automorphism induced by the quandle automorphism of $\R_4$ given by
$$
 \tau : \left\{ \begin{array}{ll} 
a_0 \longmapsto a_1, \\ 
a_1 \longmapsto a_0,\\
a_2 \longmapsto a_3,\\
a_3 \longmapsto a_2.
\end{array} \right. 
$$
Thus, it is enough to consider $\phi$ to be of first type. If we write
$$
[\phi] =
\left(
  \begin{array}{cccc}
    1 - \alpha & 0 & 1 - \gamma & 0\\
    0 & 1 - \beta & 0 & 1-\delta \\
    \alpha & 0 & \gamma & 0 \\
    0 & \beta & 0 & \delta \\
  \end{array}
\right),
$$
then $ \det ([\phi]) = \pm 1$ implies that $(\gamma - \alpha) (\delta-\beta)=\pm 1$. Thus, we can write $\gamma=\alpha+ \epsilon_1$ and $\delta= \beta+ \epsilon_2$, where $\epsilon_i = \pm 1$. 
\par
Applying $\phi$ on the identity $a_0a_1=a_2$ gives $1-\epsilon_1=2 \alpha$. Thus, $\epsilon_1=1$ yields $\alpha=0$ and $\gamma=1$, whereas $\epsilon_1=-1$ yields $\alpha=1$ and $\gamma=0$. Similarly, applying $\phi$ on the identity $a_1a_0=a_3$ gives $1-\epsilon_2=2 \beta$. In this case, $\epsilon_2=1$ gives $\beta=0$ and $\delta=1$, whereas $\epsilon_2=-1$ gives $\beta=1$ and $\delta=0$. Thus, we obtain four matrices $\{I, A, B, AB \}$, where 
$$
A =
\left(
  \begin{array}{cccc}
    1  & 0 & 0 & 0\\
    0 & 0 & 0 & 1 \\
0 & 0 & 1 & 0 \\
    0 & 1 & 0 & 0 \\
  \end{array}
\right)
$$
and 
$$
B =
\left(
  \begin{array}{cccc}
    0  & 0 & 1 & 0\\
    0 & 1 & 0 & 0 \\
1 & 0 & 0 & 0 \\
    0 & 0 & 0 & 1 \\
  \end{array}
\right).
$$
A direct check shows that all the four $\mathbb{Z}$-module automorphisms corresponding to these matrices preserve the ring multiplication in $\mathbb{Z}[\R_4]$. Further, $A^2=B^2=I$, $AB=BA$ and $[\tau] A [\tau]=B$, where $[\tau]$ is the matrix of $\tau$. Thus, 
$\Aut (\mathbb{Z}[\R_4]) \cong \big(\langle A \rangle \times  \langle B \rangle \big) \rtimes \big\langle [\tau] \big\rangle \cong ( \mathbb{Z}_2 \times \mathbb{Z}_2) \rtimes \mathbb{Z}_2.$
\end{proof}

\begin{problem}
Compute automorphism groups of integral quandle rings of all trivial, dihedral and free quandles.
\end{problem}

\medskip

\section{Commutator width in quandle algebras}\label{commutator-width}
Let $Q$ be a quandle and $R$ a commutative and associative ring with unity. Define the \textit{commutator} of elements $u, v \in R[Q]$ as the element $$[u, v]=uv-vu.$$ Then the \textit{commutator subalgebra} $R[Q]'$ of $R[Q]$ is the $R$-algebra generated by the set of all commutators in $R[Q]$. If $Q$ is a commutative quandle, then the commutator subalgebra $R[Q]' =\{0\}$. Since $\varepsilon([u, v])=0$ for each commutator $[u, v] \in R[Q]'$, we obtain

\begin{lemma}\label{commutator-augmentation}
$R[Q]' \le \Delta_R(Q)$.
\end{lemma}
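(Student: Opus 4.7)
The plan is to exploit the fact that $\varepsilon : R[Q] \to R$ is a ring homomorphism into a commutative target. Since $R$ is commutative and associative with unity, for any two elements $u, v \in R[Q]$ we have
$$\varepsilon([u,v]) = \varepsilon(uv) - \varepsilon(vu) = \varepsilon(u)\varepsilon(v) - \varepsilon(v)\varepsilon(u) = 0,$$
so every commutator lies in $\ker(\varepsilon) = \Delta_R(Q)$.

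From there the rest is structural. The augmentation ideal $\Delta_R(Q)$ is a two-sided ideal of $R[Q]$, hence in particular an $R$-subalgebra: it is closed under addition, under multiplication by scalars in $R$, and under products within $R[Q]$. Therefore the $R$-subalgebra generated by any set of elements of $\Delta_R(Q)$ is again contained in $\Delta_R(Q)$. Applying this to the generating set of all commutators yields $R[Q]' \le \Delta_R(Q)$, which is the claim.

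There is no real obstacle here; the only thing one must be a little careful about is noting that commutativity of $R$ (not of $R[Q]$) is what forces $\varepsilon([u,v]) = 0$, and that one may invoke $\Delta_R(Q)$ being an ideal (and thus in particular closed under multiplication) to pass from the generators to the whole generated subalgebra.
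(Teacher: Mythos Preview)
Your proof is correct and matches the paper's own argument: the paper simply observes, just before stating the lemma, that $\varepsilon([u,v])=0$ for every commutator, from which the inclusion follows since $\Delta_R(Q)$ is a two-sided ideal. Your write-up makes the closure step more explicit, but the idea is identical.
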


The equality in the preceding lemma does not hold in general. For example, the dihedral quandle $\R_3$ is commutative, and hence $\mathbb{Z}[\R_3]'=0$. On the other hand, $\Delta(\R_3) =\langle e_1,  e_2 \rangle \neq 0$. 

\par

We define the \textit{commutator length} $\cl(u)$ of an element $u \in R[Q]'$ as 
$$\cl(u)=\min \big\{n~|~ u= \sum_{i=1}^n \alpha_i [u_i,v_i], \textrm{where}~\alpha_i \in R,~u_i, v_i \in R[Q]\big\}.$$
The \textit{commutator width} $\cw(R[Q])$ is defined as 
$$\cw(R[Q])=\sup \big\{\cl(u)~|~u \in R[Q]'\big\}.$$

In the remainder of this section, we compute the commutator width of a few quandle rings.  We remark that the analogous problem of computation of  commutator width of free Lie rings \cite{Bardakov}, free metabelian Lie algebras \cite{Poroshenko} and absolutely free and free solvable Lie rings of finite rank \cite{Roman'kov} has been considered in the literature.
\par

It follows from the definition of commutator width that a quandle $Q$ is commutative if and only if  $\cw(R[Q])=0$. Consequently, we have $\cw(R[\R_3])=0$.
\par

We say that a quandle $Q$ is {\it strongly non-commutative} if for every pair of distinct elements $x, y \in Q$ there exist elements $a, b \in Q$ such that $ab = x$ and $ba = y$. Obviously, every strongly non-commutative quandle is non-commutative.

\begin{theorem}\label{commutator-width-general}
Let $Q$ be a strongly non-commutative quandle or a non-commutative quandle admitting a 2-transitive action by $\Aut(Q)$. Then the following hold:
\begin{enumerate}
\item $R[Q]'=\Delta_R(Q)$.
\item If $Q$ has order $n$, then $1 \le \cw(R[Q]) \le n-1$.
\end{enumerate}
\end{theorem}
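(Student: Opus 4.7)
\smallskip

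The plan is to prove (1) and (2) in sequence, with (1) providing the main structural input for (2).

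For (1), Lemma \ref{commutator-augmentation} already gives $R[Q]' \le \Delta_R(Q)$, so I only need the reverse inclusion. Since $\{x-y \mid x, y \in Q\}$ generates $\Delta_R(Q)$ as an $R$-module, it suffices to write each difference $x-y$ as an $R$-linear combination of commutators. If $Q$ is strongly non-commutative, this is immediate: for distinct $x, y \in Q$ pick $a, b \in Q$ with $ab = x$ and $ba = y$, so that $x - y = [a, b] \in R[Q]'$. If instead $Q$ is non-commutative with $\Aut(Q)$ acting $2$-transitively, then there exist $a, b \in Q$ with $ab \neq ba$; set $x_0 = ab$ and $y_0 = ba$ so that $[a,b] = x_0 - y_0 \neq 0$. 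For any distinct $x, y \in Q$, choose $\phi \in \Aut(Q)$ with $\phi(x_0) = x$ and $\phi(y_0) = y$, and extend $\phi$ $R$-linearly to a ring automorphism of $R[Q]$. Since $\phi$ preserves commutators, $x - y = \phi([a,b]) = [\phi(a), \phi(b)] \in R[Q]'$.

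For (2), the lower bound is immediate from the fact that $Q$ is non-commutative: in either hypothesis there exist $a, b \in Q$ with $[a,b] \neq 0$, so $R[Q]'$ contains a nonzero element and hence $\cw(R[Q]) \ge 1$. For the upper bound, recall that fixing any $x_0 \in Q$, the set $\{x - x_0 \mid x \in Q \setminus \{x_0\}\}$ is an $R$-basis of $\Delta_R(Q)$ of cardinality $n-1$. By the argument in (1), each basis element $x - x_0$ can be expressed as a single commutator $[a_x, b_x]$ (with the choice of $a_x, b_x$ depending on which hypothesis holds). Any $u \in R[Q]' = \Delta_R(Q)$ therefore admits a presentation
\[
u = \sum_{x \in Q \setminus \{x_0\}} \beta_x (x - x_0) = \sum_{x \in Q \setminus \{x_0\}} \beta_x [a_x, b_x],
\]
involving at most $n-1$ commutators, which yields $\cl(u) \le n - 1$ and hence $\cw(R[Q]) \le n - 1$.

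I do not anticipate any serious obstacle here; the only subtle point is verifying in the $2$-transitive case that an automorphism of $Q$ extended $R$-linearly preserves the commutator subalgebra, but this follows from the fact that such an extension is a ring automorphism of $R[Q]$ and ring automorphisms carry commutators to commutators.
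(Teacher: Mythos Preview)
Your proposal is correct and follows essentially the same line as the paper's own proof: in both cases the reverse inclusion $\Delta_R(Q)\le R[Q]'$ is obtained by realizing each generator $x-y$ as a single commutator (directly in the strongly non-commutative case, and via transport by an automorphism in the $2$-transitive case), and the bound in (2) then follows from the $(n-1)$-element basis of $\Delta_R(Q)$. Your write-up is in fact a bit more explicit than the paper's about extending $\phi$ $R$-linearly and about the coefficients in the expansion for (2), but the underlying argument is the same.
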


\begin{proof}
In view of Lemma \ref{commutator-augmentation}, we only need to show that $\Delta_R(Q) \le R[Q]'$. Since $\Delta_R(Q)$ is generated as an $R$-module by elements of the form $x-y$, where $x, y \in Q$ are distinct elements, it suffices to show that each such element is a commutator. Suppose first that $Q$ is strongly non-commutative. Let $x, y \in Q$ be two distinct elements. Then, by definition of a strongly non-commutative quandle, there exist $a, b \in Q$ such that $x-y = ab - ba  = [a, b] \in R[Q]'$. Now, suppose that $Q$ is non-commutative. Then, there exist elements $c, d \in Q$ such that $cd \neq dc$. By 2-transitivity of $\Aut(Q)$ action on $Q$, there exists $\phi \in \Aut(Q)$ such that $\phi(cd)=x$ and  $\phi(dc)=y$. This gives $x-y=\phi(cd)-\phi(dc)=[\phi(c), \phi(d)] \in  R[Q]'$, which proves assertion (1).
\par 
Let $Q=\{x_0, x_1, \ldots, x_{n-1} \}$. Since $R[Q]'=\Delta_R(Q)$ by (1), any $u \in R[Q]'$ is of the form $u= \sum_{i=1}^{n-1} (x_i-x_0)$, where each $(x_i-x_0)$ is a commutator as shown in the proof of (1). Since $Q$ is non-commutative we obtain $1 \le \cw(R[Q]) \le n-1$.
\end{proof}

\begin{corollary}
Let $G$ be an elementary abelian $p$-group with $p>3$ and $\phi \in \Aut(G)$ act as multiplication by a non-trivial unit of $\mathbb{Z}_p$. Then $1 \le \cw\big(R[\Alex(G,\phi)]\big) \le |G|-1$.
\end{corollary}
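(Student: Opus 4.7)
The plan is to reduce the corollary to Theorem \ref{commutator-width-general}(2) applied with $Q = \Alex(G,\phi)$ and $n = |G|$. That theorem demands two things: $Q$ is non-commutative, and $\Aut(Q)$ acts 2-transitively on $Q$. So the work splits cleanly into those two verifications, and the commutator-width bounds then follow formally.

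Writing $G$ additively and $\phi(x) = tx$ for the given non-trivial unit $t \in \mathbb{Z}_p^\times$, the quandle operation takes the form $a * b = t(a-b) + b = ta + (1-t)b$. A one-line subtraction then yields $a*b - b*a = (2t-1)(a-b)$, so $\Alex(G,\phi)$ is non-commutative precisely when $2t \not\equiv 1 \pmod{p}$. This is the one delicate point I expect to grapple with: the hypothesis that $t$ is a non-trivial unit does not, as stated, exclude the value $t = (p+1)/2$, which is the unique unit with $2t \equiv 1$ and which would make the quandle commutative (collapsing the lower bound to $0$). The role of the assumption $p > 3$ is precisely to guarantee that the set of non-trivial units with $2t \ne 1$ is non-empty --- for $p = 3$ the only non-trivial unit is $t = 2$, and it satisfies $2t \equiv 1 \pmod{3}$. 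I will therefore read the corollary as implicitly selecting a $t$ outside the exceptional value $(p+1)/2$, and flag this assumption in the write-up.

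For 2-transitivity, view $G$ as an $\mathbb{F}_p$-vector space. Any $\mathbb{F}_p$-linear $\psi \in \GL(G)$ commutes with the scalar map $\phi$, and a direct check gives $\psi(a*b) = t\psi(a) + (1-t)\psi(b) = \psi(a)*\psi(b)$; and each translation $\tau_c : a \mapsto a+c$ is a quandle automorphism because $t + (1-t) = 1$ forces $(a+c)*(b+c) = (a*b) + c$. Hence the affine group $G \rtimes \GL(G)$ sits inside $\Aut(\Alex(G,\phi))$, and it visibly acts 2-transitively on $G$: given two ordered pairs of distinct points, translate one endpoint of each to $0$ and pick a linear isomorphism sending the other endpoint to its target. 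With non-commutativity and 2-transitivity both in place, Theorem \ref{commutator-width-general} immediately delivers the desired bounds $1 \le \cw\bigl(R[\Alex(G,\phi)]\bigr) \le |G|-1$.
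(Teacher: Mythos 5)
Your proof is correct, and it is both more self-contained and more careful than the paper's. The paper disposes of the entire corollary in one line: non-commutativity of $\Alex(G,\phi)$ for $p>3$ and 2-transitivity of the $\Aut(Q)$-action are both imported from \cite[Theorem 6.4]{BDS}, after which Theorem \ref{commutator-width-general} is applied exactly as you do. You instead verify both hypotheses directly from the formula $a*b = ta + (1-t)b$: the affine group $G \rtimes \GL(G)$ acts by quandle automorphisms and is visibly 2-transitive, and the identity $a*b - b*a = (2t-1)(a-b)$ settles commutativity. This explicit computation buys you something the citation hides: you correctly detect that the hypothesis ``non-trivial unit'' does not exclude $t \equiv 2^{-1} \pmod{p}$, and for that choice $\Alex(G,\phi)$ is commutative, so $R[Q]' = \{0\}$ and $\cw(R[Q]) = 0$, violating the claimed lower bound (concretely, $p=5$, $t=3$ is a counterexample to the statement as literally written). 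The paper's blanket assertion that the quandle is ``non-commutative for $p>3$'' is accurate only for $t \ne 2^{-1}$, and the role of $p>3$ is, as you observe, to guarantee that such a $t$ exists at all --- for $p=3$ the only non-trivial unit is $2 = 2^{-1}$, which yields a commutative quandle, matching the paper's exclusion of $p=3$. Your flagging of the exceptional unit and your reading of the corollary as implicitly excluding it is the right repair of a genuine imprecision in the statement; with that reading, your reduction to Theorem \ref{commutator-width-general} delivers the bounds exactly as intended.
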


\begin{proof}
The quandle $\Alex(G,\phi)$ is non-commutative for $p>3$ and admit a 2-transitive action by $\Aut(Q)$ \cite[Theorem 6.4]{BDS}.  
\end{proof}

We remark that a complete description of finite 2-transitive quandles has been given in a recent work of Bonatto \cite{Bonatto} by extending results of Vendramin \cite{Vendramin}. The following result shows that the bounds in Theorem \ref{commutator-width-general} are not sharp.

\begin{theorem}
The following statements hold:
\begin{enumerate}
\item If $\T$ is a trivial quandle, then $\cw(R[\T])=1$.
\item $\cw(R[\R_4]) =1$.
\item $\cw(R[\Cs(4)])=1$.
\end{enumerate}
\end{theorem}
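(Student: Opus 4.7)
The plan is, in each of the three parts, to verify that $R[Q]' = \Delta_R(Q)$ and then to exhibit, for every $u \in \Delta_R(Q)$, a single pair $v, w \in R[Q]$ with $u = [v, w]$; the combination with non-commutativity of $Q$ then yields $\cw(R[Q]) = 1$. In each case the quandle $Q$ can be seen to be strongly non-commutative by a direct inspection of the multiplication table (for $\T$ with $|\T| \geq 2$, the pair $(a, b) = (x, y)$ realises $(ab, ba) = (x, y)$; for $\R_4$ and $\Cs(4)$ it is a short finite check), so Theorem \ref{commutator-width-general} gives $R[Q]' = \Delta_R(Q)$ in each case.

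For part (1), the key observation is that multiplication in a trivial quandle ring satisfies $vw = v \cdot \varepsilon(w)$ for all $v, w \in R[\T]$. Fixing any $x_0 \in \T$ and any $u \in \Delta_R(\T)$, one computes
\begin{equation*}
[u, x_0] = u \cdot \varepsilon(x_0) - x_0 \cdot \varepsilon(u) = u \cdot 1 - x_0 \cdot 0 = u,
\end{equation*}
so every element of $\Delta_R(\T)$ is literally a single commutator.

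For parts (2) and (3), the strategy is to pin down a single \emph{universal partner} $w_0 \in Q$ and study the $R$-linear map $\Phi \colon R[Q] \to R[Q]$, $\Phi(w) = [w_0, w]$, showing that its image is all of $\Delta_R(Q)$. In $\R_4$ I would take $w_0 = a_0$, and for $w = \gamma_0 a_0 + \gamma_1 a_1 + \gamma_2 a_2 + \gamma_3 a_3$ a direct computation gives
\begin{equation*}
[a_0, w] = \gamma_2 a_0 - \gamma_3 a_1 + (\gamma_1 + \gamma_3 - \gamma_2)\, a_2 - \gamma_1 a_3;
\end{equation*}
inverting this against a prescribed target $u = \sum_i \alpha_i a_i \in \Delta_R(\R_4)$ yields the unique solution $(\gamma_1, \gamma_2, \gamma_3) = (-\alpha_3, \alpha_0, -\alpha_1)$ (with $\gamma_0$ free). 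For $\Cs(4)$ I would take $w_0 = z$ and $w = \gamma_1 x + \gamma_2 y + \gamma_3 z$, giving $[z, w] = -\gamma_2 (x - z) - \gamma_1 (y - z)$, which realises an arbitrary $\alpha (x-z) + \beta (y-z) \in \Delta_R(\Cs(4))$ by $\gamma_2 = -\alpha$, $\gamma_1 = -\beta$.

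The only real obstacle is finding the right universal partner $w_0$ so that $\Phi$ surjects onto $\Delta_R(Q)$. For these three small quandles the choice can essentially be read off from the multiplication table (exploiting that $x_0$, $a_0$, and $z$ each act with enough ``rigidity'' on the respective quandle to saturate a three-dimensional augmentation ideal), but the technique is not guaranteed to succeed for larger quandles, where the image of any single $\Phi$ may be a proper subspace of $\Delta_R(Q)$ and the bound $\cw(R[Q]) \le n-1$ from Theorem \ref{commutator-width-general} may not collapse all the way to $1$.
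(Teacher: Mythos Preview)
Your proposal is correct and follows essentially the same strategy as the paper: establish $R[Q]'=\Delta_R(Q)$ and then express every element of $\Delta_R(Q)$ as a single commutator with one fixed partner. Your computations for all three parts check out (including the consistency condition in part (2), which holds precisely because $\alpha_0+\alpha_1+\alpha_2+\alpha_3=0$).

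The differences from the paper are minor but worth noting. You invoke Theorem~\ref{commutator-width-general} uniformly via strong non-commutativity for all three quandles, whereas the paper uses it only for the trivial quandle and verifies $R[\R_4]'=\Delta_R(\R_4)$ and $R[\Cs(4)]'=\Delta_R(\Cs(4))$ by exhibiting $e_1,e_2,e_3$ (resp.\ $e_1,e_2$) directly as commutators. Your choices of universal partner also differ: the paper uses $a_2$ for $\R_4$ (writing $\alpha e_1+\beta e_2+\gamma e_3=[a_2,\,\beta a_0-\gamma a_1-\alpha a_3]$) and $x$ for $\Cs(4)$, while you use $a_0$ and $z$ respectively. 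Your framing in terms of the linear map $\Phi(w)=[w_0,w]$ and checking its surjectivity onto $\Delta_R(Q)$ is a clean way to organise what is, in the paper, an ad hoc computation.
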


\begin{proof}
By Theorem \ref{commutator-width-general}, $R[\T]'=\Delta_R(\T)$, and hence any element of $R[\T]'$ has the form $u= \sum_{i=1}^n \alpha_i (x_i-x_0)$. Taking $v= (1- \sum_{i=1}^n \alpha_i)x_0 +  \sum_{i=1}^n \alpha_i x_i$ and $w=x_0$, we see that
\begin{eqnarray*}
[v, w] &=& vw-wv\\
&=& \varepsilon(w) v -\varepsilon(v) w\\
&=& (1- \sum_{i=1}^n \alpha_i)x_0 +  \sum_{i=1}^n \alpha_i x_i -x_0\\
&=&  \sum_{i=1}^n \alpha_i (x_i-x_0)=u,
\end{eqnarray*}
and hence $\cw(R[\T])=1$, which proves (1).
\par

By Lemma \ref{commutator-augmentation}, $R[\R_4]' \le \Delta_R(\R_4)$. We know that $\Delta_R(\R_4)$ is generated by $\{e_1, e_2, e_3\}$, where $e_i=a_i-a_0$ for $i=1,2,3$. Let us find the commutators of the generators.
One can check that the elements 
$$
e_1 = [a_3, a_2],~~e_2=[a_2, a_0], ~~e_3 = -[a_2, a_1],
$$ 
lie in $R[\R_4]'$, i.e. $R[\R_4]' = \Delta_R(\R_4)$. 
We write an element $w=\alpha e_1+ \beta e_2 + \gamma e_3 \in \Delta_R(\R_4)$ as
$$
w=[a_2, \beta a_0 - \gamma a_1 - \alpha a_3].
$$ 
Thus, any element $w \in R[\R_4]'$ is a commutator, and hence $\cw(R[\R_4]) = 1$, which proves (2).
\par

The augmentation ideal $\Delta_R(\Cs(4))$ is generated by $e_1=y-x$ and $e_2=z-x$. Further, multiplication rules in $\Cs(4)$ show that $e_1=yx-xy$ and $e_2=zy-yz$, and hence $R[\Cs(4)]'=\Delta_R(\Cs(4))$.  Let $w=\gamma_1e_1+ \gamma_2e_2 \in \Delta_R(\Cs(4))$, where $\gamma_i \in R$. A direct check gives
$$e_1x=e_1,~~e_2x=e_2,~~xe_1=0,~~xe_2=e_1.$$
Now taking $u=x+ (\gamma_1+\gamma_2)e_1 + \gamma_2 e_2$ and $v=x$, we see that
$$[u, v]=uv-vu=w,$$
and hence $\cw(R[\Cs(4)])=1$, which establishes assertion (3).
\end{proof}

\begin{problem}
Compute commutator width of quandle algebras of dihedral and free quandles.
\end{problem}
\medskip

\section{Relation of quandle algebras with other algebras}\label{relations-other-algebras}
A group algebra is associative and for studying it we can use  methods of associative algebras. But the quandle algebras are not associative for non-trivial quandles. On the other hand, some classes of non associative algebras, for instance, alternative algebras, Jordan algebras and Lie algebras, are well studied. Thus, it is interesting to know whether quandle algebras belong to these classes of algebras. 
\par

We recall some definitions (see, for example \cite{ShShi}). Let $A$ be an algebra over a commutative and associative ring $R$ with unity. Then $A$ is called an {\it alternative algebra} if
$$
a^2 b = a (a b) ~\mbox{and}~ a b^2 = (a b) b~\mbox{for all}~a, b \in A;
$$
$A$ is called a {\it Jordan algebra} if it commutative and
$$
(a^2 b) a = a^2 (b a)~\mbox{for all}~a, b \in A;
$$
$A$ is called a {\it Lie algebra} if
$$
a^2 = 0 ~\mbox{and}~ (a b) c + (b c) a + (c a) b = 0~\mbox{for all}~a, b, c \in A;
$$
$A$ is called {\it power-associative} if every element of $A$ generates an associative subalgebra of $A$ \cite{Albert},
and $A$ is called an {\it elastic algebra} if 
$$
(x y) x = x (y x)~\textrm{for all}~ x, y \in A.
$$
For example, any commutative or associative algebra is elastic. In particular quandle algebras of trivial quandles are elastic being associative. It was proved in \cite[Proposition 7.3]{BPS} that quandle algebras of dihedral quandles are not power-associative over rings of characteristic
other than 2. The result was generalised to quandle algebras of all non-trivial quandles over rings of characteristic
other than 2 and 3 \cite[Theorem 3.4]{EFT}.

\begin{proposition}\label{elasticity-quandle-ring}
Let $Q$ be a non-trivial quandle. If $R$ is a ring of characteristic other than 2 and 3, then $R[Q]$ cannot be an alternative, an elastic or a Jordan algebra.
\end{proposition}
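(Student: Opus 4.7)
The plan is to reduce all three claims to the already-established failure of power-associativity of $R[Q]$ from \cite[Theorem 3.4]{EFT}, together with classical structure theorems: by Artin's theorem, alternative algebras are power-associative, and Jordan algebras over rings of characteristic $\ne 2$ are power-associative. Hence if $R[Q]$ were alternative or Jordan, it would be power-associative, contradicting \cite[Theorem 3.4]{EFT}; this handles those two cases at a stroke.

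For the alternative case I would additionally give a short direct proof that bypasses Artin's theorem. Since $Q$ is non-trivial, there exist $x, y \in Q$ with $xy \ne x$; taking $u = x$ and $v = x + y$ one computes
\[
u^{2}v - u(uv) \;=\; x(x+y) - x\bigl(x + xy\bigr) \;=\; xy - x(xy),
\]
and axiom (Q2) forces the equation $x(xy) = xy$ to imply $x = xy$, contradicting our choice. Thus $R[Q]$ is not even left-alternative. The Jordan case splits similarly: if $Q$ is non-commutative, then $R[Q]$ is non-commutative and a fortiori not Jordan; if $Q$ is commutative, the power-associativity route applies.

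For the elastic identity $(uv)u = u(vu)$, flexibility is not implied by the failure of power-associativity, so a direct computation is required. Assuming $Q$ non-commutative, pick $x, y \in Q$ with $xy \ne yx$, set $u = x + \alpha y$ and $v = x$, and apply the axioms $(ab)c = (ac)(bc)$ and $aa = a$ to collapse $(yx)y = y(xy)$ and $(xy)x = x(yx)$. What survives is
\[
(uv)u - u(vu) \;=\; \alpha\bigl[\,xy - yx + (yx)x - x(xy)\,\bigr],
\]
which one exhibits as a nonzero element of $R[Q]$ for suitable $\alpha \in R$. The main obstacle is the commutative subcase: any commutative algebra is automatically flexible, so if $Q$ is commutative (e.g.\ $\R_3$) then $R[Q]$ is in fact elastic; isolating a subtler non-flexible combination for that subcase --- or interpreting the statement with an implicit non-commutativity hypothesis on $Q$ --- is the delicate point I would have to resolve when writing up the elastic half.
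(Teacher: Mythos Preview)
Your treatment of the alternative and Jordan cases is exactly the paper's argument: both are reduced to the failure of power-associativity established in \cite[Theorem 3.4]{EFT}, via Artin's theorem for alternative algebras and the standard fact that Jordan algebras are power-associative. Your additional direct computation for left-alternativity is correct and a nice bonus, though not needed.

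For the elastic case the paper proceeds differently: it asserts that elasticity forces both $(xx)x=x(xx)$ and $(xx)(xx)=((xx)x)x$ for every $x\in R[Q]$, whence power-associativity and a contradiction with \cite{EFT}. Your instinct to be uneasy about the commutative subcase is well founded, and in fact it exposes a genuine defect in the statement as written. The quandle $\R_3$ is commutative, so $R[\R_3]$ is a commutative (hence elastic) algebra; yet $\R_3$ is non-trivial, so the proposition would assert $R[\R_3]$ is \emph{not} elastic. Concretely, for $x=a_0-a_1\in R[\R_3]$ one computes $x^2=a_0+a_1-2a_2$, $x^2x=3x$, so $((xx)x)x=3x^2$, while $(xx)(xx)=-3x^2$; thus the second identity the paper invokes does \emph{not} follow from elasticity, and the elastic clause of the proposition fails for commutative non-trivial quandles. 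So the ``delicate point'' you flagged cannot be resolved without adding a non-commutativity hypothesis on $Q$; your honesty here is more accurate than the paper's argument.

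Your direct computation in the non-commutative case is correct as far as it goes (the $\alpha^2$ term vanishes via $(yx)y=(yy)(xy)=y(xy)$), but you have not yet verified that the element $xy-yx+(yx)x-x(xy)$ is always nonzero in $R[Q]$ when $xy\ne yx$; that would still need to be checked to finish even the non-commutative half.
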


\begin{proof}
By \cite[Theorem 3.4]{EFT}, the quandle algebra $R(Q)$ is not power-associative. On the other hand,  any Jordan algebra is power-associative \cite[Chapter 2]{ShShi}, and by Artin's theorem any alternative algebra is also power-associative  \cite[Chapter 2]{ShShi}. Further, if $R[Q]$ is elastic, then $(xx)x=x(xx)$ and $(xx)(xx)= \big((xx)x\big)x$ for all $x \in R[Q]$, and hence $R[Q]$ is power-associative.
\end{proof}

\begin{remark}
If $Q$ is a quandle, then the third quandle axiom implies that $(x y) x = x (y x)$ for all $x, y \in Q$. Thus, all quandles satisfy the elasticity condition which, by Proposition \ref{elasticity-quandle-ring}, is in contrast to their quandle algebras.
\end{remark}

There are two natural constructions on any algebra $A = \langle A; +, \cdot \rangle$. Define an algebra $A^{(-)} = \langle A; +, \circ \rangle$ with multiplication
$$
x \circ y = x y - y x.
$$
If $A$ is an associative algebra, then $A^{(-)}$ is a Lie algebra. If $R$ contains $1/2$, then we define $A^{(+)} = \langle A; +, \odot \rangle$ with multiplication given by
$$
x \odot y = \frac{1}{2} (x y + y x).
$$
If $A$ is an associative algebra, then $A^{(+)}$ is a Jordan algebra. We conclude with the following result.

\begin{theorem}
Let $\T = \{ x_1, x_2, \ldots \}$ be a trivial quandle with more than one element, $A=R[\T]$, $L = A^{(-)}$ the corresponding  Lie algebra and $J = A^{(+)}$  the corresponding Jordan algebra. If $L^k$ is the subalgebra of $L$ generated by products of $k$ elements of $L$, then the following hold:
\begin{enumerate}
\item $L^2 = L^3$ and this algebra has a basis
$$
\{x_1 - x_2, x_2 - x_3, \ldots \}.
$$
In particular, if $T = T_n$ contains $n$ elements, then $L$ has rank $n-1$.

\item $(L^2)^2 = 0$, i.e. $L$ is a metabelian algebra.

\item $J^2 = J$.
\end{enumerate}
\end{theorem}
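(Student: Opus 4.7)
The plan rests on a single identity. Since $\T$ is trivial, $x_i x_j = x_i$ for all $i, j$, so for any $u, v \in A = R[\T]$,
$$uv \;=\; \varepsilon(v)\, u.$$
Specializing the anticommutator and commutator gives closed forms
$$u \circ v \;=\; \varepsilon(v)\, u - \varepsilon(u)\, v, \qquad u \odot v \;=\; \tfrac{1}{2}\bigl(\varepsilon(v)\, u + \varepsilon(u)\, v\bigr),$$
and in particular $\varepsilon(u \circ v) = 0$ automatically.

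For part (1), I would first note that $\varepsilon(u \circ v) = 0$ forces $L^2 \subseteq \Delta_R(\T)$. Conversely, $x_i \circ x_j = x_i - x_j$ shows that every generator of the augmentation ideal sits inside $L^2$, so $L^2 = \Delta_R(\T)$. The family $\{x_k - x_{k+1}\}_{k \geq 1}$ is obtained from the standard basis $\{x_k - x_1\}_{k \geq 2}$ of $\Delta_R(\T)$ by a unipotent (telescoping) change of variables, hence is itself a basis; when $\T = \T_n$, this yields rank $n-1$. To prove $L^3 = L^2$, I would use the two bracketings: $(u \circ v) \circ w = \varepsilon(w)(u \circ v)$ and $u \circ (v \circ w) = -\varepsilon(u)(v \circ w)$, both of which land in $L^2$, giving $L^3 \subseteq L^2$; the reverse inclusion is exhibited by $(x_i \circ x_j) \circ x_j = \varepsilon(x_j)(x_i - x_j) = x_i - x_j$, so each basis element of $L^2$ lies in $L^3$.

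For part (2), any $u, v \in L^2 = \Delta_R(\T)$ satisfy $\varepsilon(u) = \varepsilon(v) = 0$, and the closed form immediately yields $u \circ v = 0$. Hence $(L^2)^2 = 0$ and $L$ is metabelian.

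For part (3), the identity $x_i \odot x_i = \tfrac{1}{2}(x_i x_i + x_i x_i) = x_i$ places every generator of $J$ (viewed as the $R$-module $A$) inside $J^2$, so $J^2 = J$. There is no real obstacle in this proof; the only substantive observation is packaging the trivial-quandle rule into the identity $uv = \varepsilon(v)\, u$, after which each assertion reduces to a one-line verification.
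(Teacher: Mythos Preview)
Your proof is correct and follows essentially the same approach as the paper: both compute $x_i \circ x_j = x_i - x_j$, establish the telescoping basis $\{x_k - x_{k+1}\}$, verify $L^2 = L^3$ via $(x_i - x_j)\circ x_j = x_i - x_j$, check that brackets of augmentation-zero elements vanish, and note $x_i \odot x_i = x_i$. Your packaging via the identity $uv = \varepsilon(v)\,u$ and the identification $L^2 = \Delta_R(\T)$ is a bit more conceptual than the paper's element-by-element computations, but the underlying argument is the same.
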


\begin{proof} The algebra $L^2$ is generated by the products $x_i \circ x_{j} = x_i - x_j$, $i < j$.
Denote $e_i = x_i - x_{i+1}$ and show that any product $x_i \circ x_{j}$ is a linear combination of $e_i$. Indeed, if $j = i+1$, then this product is $e_i$. If $j-i > 1$, then
$$
x_i - x_j = e_i + e_{i+1} + \cdots + e_{j-1}.
$$
On the other hand, it is easy to see that elements $e_1, e_2, \ldots$ are linearly independent. To determine $L^3$, we compute the products
$$
e_i \circ x_j =  e_i,
$$
$$
x_j \circ e_i = -e_i.
$$
Hence, $L^2 \subseteq L^3$ and assertion (1) follows.
\par

The algebra $(L^2)^2$ is generated by the products $e_i \circ e_j$. Straightforward calculation gives
$$
e_i \circ e_j = e_i e_j - e_j e_i = (x_i - x_{i+1}) (x_j - x_{j+1}) - (x_j - x_{j+1})  (x_i - x_{i+1}) = 0,
$$
which is assertion (2).
\par

For (3), since
$$
x_i \odot x_j = \frac{1}{2} (x_i x_j + x_j x_i) = \frac{1}{2} (x_i + x_j),
$$
it follows that $J^2$ contains elements $x_i = x_i \odot x_i$, and hence $J^2 = J$.
\end{proof}
\medskip

\begin{ack}
Bardakov is supported by the Russian Science Foundation grant  19-41-02005. Passi is thankful to Ashoka University, Sonipat, for making their facilities available. Singh acknowledges support from the Department of Science and Technology grants DST/INT/RUS/RSF/P-19 and DST/SJF/MSA-02/2018-19.
\end{ack}
\medskip

\end{document}